\def\@seccntformat#1{\csname the#1\endcsname.\ } 
\begin{document}

\title{Distance-$2$ MDS codes and latin colorings in the Doob graphs%
\thanks{This is the accepted version of the paper in Graphs and Combinatorics, Vol.~34, Iss.~5, pp. 1001--1017, 2008;
doi \href{10.1007/s00373-018-1926-4}{https://doi.org/10.1007/s00373-018-1926-4} \copyright~Springer Japan KK, part of Springer Nature 2018}
\thanks{The work was funded by the Russian Science Foundation (grant No 14-11-00555).}
}

\titlerunning{Distance-$2$ MDS codes and latin colorings in the Doob graphs}        

\author{Denis S. Krotov \and Evgeny A. Bespalov 
}


\institute{D. Krotov \at
              Sobolev Institute of Mathematics, pr. Akademika Koptyuga 4, Novosibirsk 630090, Russia \\
              Tel.: +7-383-329-75-42\\
              \email{krotov@math.nsc.ru}           \\
               orcid 0000-0002-8516-755X
           \and
           E. Bespalov \at
              Sobolev Institute of Mathematics, pr. Akademika Koptyuga 4, Novosibirsk 630090, Russia\\
              \email{bespalovpes@mail.ru}\\
              orcid 0000-0001-9484-0091
}

\date{Received: 2016-11-15 / Accepted: 2018-07-11}

\maketitle

\begin{abstract}
 The maximum independent sets in the Doob graphs $D(m,n)$ are analogs of the distance-$2$ MDS codes in Hamming graphs and of the latin hypercubes. We prove the characterization of these sets stating that every such set is semilinear or reducible. As related objects, we study vertex sets with maximum cut (edge boundary) in $D(m,n)$ and prove some facts on their structure. We show that the considered two classes (the maximum independent sets and the maximum-cut sets) can be defined as classes of completely regular sets with specified $2$-by-$2$ quotient matrices. It is notable that for a set from the considered classes, the eigenvalues of the quotient matrix are the maximum and the minimum eigenvalues of the graph. For  $D(m,0)$, we show the existence of a third, intermediate, class of completely regular sets with the same property.
\keywords{ Doob graph \and
 Maximum independent set \and
 Maximum cut \and
 MDS code \and
 Latin hypercube \and
 Equitable partition \and
 Completely regular set}
 \subclass{05B15 \and 94B25 }
\end{abstract}

\def\Sh{\mathrm{Sh}}

\def\MMDS{$2\times$MDS }
\def\VV{{\scriptscriptstyle\mathrm{V}}}
\def\EE{{\scriptscriptstyle\mathrm{E}}}

\newcommand{\MMMDS}{$1\frac12\times$MDS }

\def\shpart#1 #2 #3 #4!{node [#1] {} +(1,0) node [#2] {} +(2,0) node [#3] {} +(3,0) node [#4] {}}
\def\sh#1#2#3#4{\begin{tikzpicture}[
scale=0.7,
nz/.style={circle,fill=white,draw=black, 
           inner sep=2.5pt},
xz/.style={circle,fill=black!50!white,draw=black, 
           inner sep=2.5pt},
vz/.style={circle,fill=black!23!white,draw=black, 
           inner sep=2.5pt},
wz/.style={circle,fill=black!66!white,draw=black, 
           inner sep=2.5pt},
zz/.style={circle,fill=black,draw=black, 
           inner sep=2.5pt},
scale=0.7]
\begin{scope}
\clip [xslant=-0.577] (-1.4,-1.20) rectangle (2.4,2.1);
\draw[xslant=0.577,ystep=.866,xstep=1,draw=black] (-4.9,-2.1) grid (5.4,3.9);
\draw[xslant=-0.577,ystep=9.866,xstep=1,draw=black] (-3.4,-2.1) grid (6.4,3.9);
\draw (-120:1) \shpart #4!
++(120:1) \shpart #3!
++(120:1) \shpart #2!
++(120:1) \shpart #1!;
\end{scope}
\end{tikzpicture}}

\section{Introduction}\label{s:intro}

In this paper, we characterize the maximum independent sets in the Doob graphs.
The Doob graph $D(m,n)$, as a distance-regular graph, has the same parameters as the Hamming graph $H(2m+n,4)$.
As we will see, 
the maximum independent sets in the Hamming graphs share many properties with those sets in the Doob graphs.
This conclusion was already suggested
by the existence of the injective map 
from the class of maximum independent sets in $D(m,n)$
to the class of maximum independent sets in $H(2m+n,4)$ \cite{Kro:2015:N-MDS-Doob}. However, there are some difficulties
with the use of that injection for the characterization of the former class,
and the authors of the current paper decided to use another way to prove the characterizing theorem, presented in Section~\ref{s:res}.

The maximum independent sets in the Hamming graphs are studied in different areas of mathematics.
In coding theory, they are the distance-$2$ MDS codes (these codes do not correct any errors, 
but they are used in the construction of codes with larger code distance).
In combinatorics, these sets are known as the latin hypercubes, 
multidimensional generalizations of the latin squares, which correspond to the case $n=3$ 
(one of the coordinate is usually considered as dependent from the others).
In nonassociative algebra, an $n$-ary quasigroup is exactly 
a pair of a set and an $n$-ary operation over this set 
whose value table is a latin hypercube.
Every maximum independent set in a Hamming graph is a completely regular code of radius $1$;
the nontrivial eigenvalue of this code is the minimum eigenvalue of the graph.

For fixed $q$, the class of the distance-$2$ MDS codes in $H(N,q)$ is described for $q\le 4$. 
For $q\le 3$, the description is rather simple as for each $n$ there is only one such set, up to equivalence.
In the case $q=4$, there are $2^{2^{N+o(N)}}$ nonequivalent MDS codes in $H(N,4)$;
however, there is a constructive description of the class of these codes \cite{KroPot:4}.
Notably, the case $q=4$ is a special case for the Hamming graphs $H(N,q)$ 
from the point of view of algebraic combinatorics:
$H(N,4)$, $N\ge2$ are the only Hamming graphs that are not defined as distance-regular graphs with given parameters.
The distance-regular graphs with the same parameters as $H(N,4)$ are the Doob graphs $D(m,n)$, $2m+n=N$.

The main goal of the current research is 
to extend the characterization theorem \cite{KroPot:4} 
for the distance-$2$ 
MDS codes in $H(n,4)=D(0,n)$ to the 
maximum independent sets 
in $D(m,n)$ with $m>0$.
The characterization theorem 
is formulated in Section~\ref{s:res}
and proven in Section~\ref{s:proof}.
In conjunction with the results of \cite{BesKro:MDS}, this result can also be considered as completing the characterization 
of the MDS codes in the Doob graphs (there is also an infinite class of MDS codes with distance coinciding to the graph diameter
and a few distance-$3$ and distance-$4$ codes \cite{BesKro:MDS}).

As an intermediate result, in Section~\ref{s:key}
we prove a connection between properties of related objects,
\MMDS codes, 
which can be defined as the sets with largest edge boundary
(see Subsection~\ref{s:cut}). 
This result also generalizes
 its partial case $m=0$, considered in \cite{Kro:2codes}.
 
In Section~\ref{s:crc},
we consider alternative definitions 
of MDS and \MMDS codes in Doob graphs.
In particular, 
we show that
these two classes can be characterized
in terms of equitable $2$-partitions
with quotient matrices whose nontrivial eigenvalue 
is the 
minimum eigenvalue of the graph.
In $D(m,0)$, we construct an equitable partition
that has intermediate parameters between 
MDS and \MMDS codes
(strictly speaking, the quotient matrix 
of a new equitable partition is the
arithmetic average of two quotient matrices
corresponding to an MDS code and to a \MMDS code).
The existence of such ``intermediate'' objects 
between MDS and \MMDS codes
is a new effect, 
which has no analogs in Hamming graphs.

The next two sections contain preliminaries and auxiliary facts.


\section{Preliminaries}\label{s:pre}

\begin{figure}[hbt]
\begin{center}
\begin{tikzpicture}[
xscale=0.6,yscale=0.5,xslant=-0.5,
nn/.style={circle,fill=white,draw=black,thick, 
           inner sep=0.7pt}]
           \begin{scope}  
\clip [] (-0.45,-0.45) rectangle (3.45,3.45);
\draw[ystep=1,xstep=1,very thick] (-4.9,-2.1) grid (5.4,3.9);
\draw[xslant=1,ystep=9,xstep=1,very thick] (-3.4,-2.1) grid (6.4,3.9);
\end{scope}
\draw [dashed,thin] (-0.5,-0.5) rectangle (3.5,3.5);
\draw [dashed,thin,<->,rounded corners=3.5mm] (-0.5,2.7) -- (-1.2,2.7) -- (-1.2,3.8) -- (4.2,3.8) --(4.2,2.7) -- (3.5,2.7);
\draw [dashed,thin,<->,rounded corners=3.5mm] (-0.5,0.3) -- (-1.2,0.3) -- (-1.2,-0.8) -- (4.2,-0.8) --(4.2,0.3) -- (3.5,0.3);
\draw [dashed,thin,<->,rounded corners=3.5mm] (0.3,-0.5) -- (0.3,-1.2) -- (-0.8,-1.2) -- (-0.8,4.2) --(0.3,4.2) -- (0.3,3.5);
\draw [dashed,thin,<->,rounded corners=3.5mm] (2.7,-0.5) -- (2.7,-1.2) -- (3.8,-1.2) -- (3.8,4.2) --(2.7,4.2) -- (2.7,3.5);
\draw 
  (0,0) node [nn] {$\scriptstyle 00$} +(1,0) node [nn] {$\scriptstyle  10$} +(2,0) node [nn] {$\scriptstyle  20$} +(3,0) node [nn] {$\scriptstyle  30$}
++(0,1) node [nn] {$\scriptstyle  01$} +(1,0) node [nn] {$\scriptstyle  11$} +(2,0) node [nn] {$\scriptstyle  21$} +(3,0) node [nn] {$\scriptstyle  31$}
++(0,1) node [nn] {$\scriptstyle  02$} +(1,0) node [nn] {$\scriptstyle  12$} +(2,0) node [nn] {$\scriptstyle  22$} +(3,0) node [nn] {$\scriptstyle  32$}
++(0,1) node [nn] {$\scriptstyle  03$} +(1,0) node [nn] {$\scriptstyle  13$} +(2,0) node [nn] {$\scriptstyle  23$} +(3,0) node [nn] {$\scriptstyle  33$};
\end{tikzpicture}
\end{center}
\caption{The Shrikhande graph drown on a torus; the vertices are identified with the elements of $Z_4^2$}
\label{fig:Sh0123}
\end{figure}
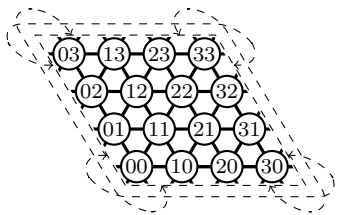
The \emph{Shrikhande graph} $\Sh$ 
is the Cayley graph of the group $Z_4^2$
with the connecting set $\{01,10,11,03,30,33\}$
(the vertices of the graph are the elements of the group $Z_4^2$, which will be denoted $00$, $01$, $02$, $03$, $10$, ..., $33$; 
two vertices are adjacent if and only if their difference belongs to the connecting set)
see Fig.~\ref{fig:Sh0123}.
The complete graph $K=K_4$ of order $4$
is the Cayley graph of the group $Z_2^2$
with the connecting set $\{01,10,11\}$.
The Cartesian product of $m$ copies of $\Sh$
and $n$ copies of $K_4$ will be denoted by $D(m,n)$.
This graph is called a \emph{Doob graph} if $m>0$,
while $D(0,n)$ is a $4$-ary \emph{Hamming graph}.
Note that $D(m,n)$ is a Cayley graph of $(Z_4^2)^m\times (Z_2^2)^n$, with the corresponding connecting set.

Given a graph $G$, by $\VV G$ we denote its set of vertices. 
Two subsets of $\VV G$ are said to be \emph{equivalent} if there is a graph automorphism that maps one subset to the other.

An independent set of vertices of maximal cardinality, 
i.e. $4^{2m+n-1}$, in $D(m,n)$ is called a distance-$2$ MDS code
(the independence number  $4^{2m+n-1}=|\VV D(m,n)|/4$ follows easily from the independence numbers $4$ and $1$ of $\Sh$ and $K$, respectively).
The two inequivalent MDS codes in $D(1,0)$ are shown in Fig.~\ref{f:mds}.

\begin{lemma}\label{l:subgraph}
  If a subgraph $G$ of $D(m,n)$ is isomorphic to a Doob graph (with smaller parameters), then the intersection of an MDS code with $\VV G$ is an MDS code in $G$. 
\end{lemma}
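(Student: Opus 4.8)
The plan is a short double-counting argument resting on the observation that $G$ is one member of a partition of $D(m,n)$ into parallel copies of itself. A subgraph isomorphic to a smaller Doob graph $D(m',n')$ is obtained by fixing the entries in $m-m'$ of the Shrikhande factors and in $n-n'$ of the $K_4$ factors; letting these fixed entries run over all of their values yields a family $\mathcal F$ of $16^{m-m'}\cdot 4^{n-n'}=4^{(2m+n)-(2m'+n')}$ pairwise-disjoint induced subgraphs, each isomorphic to $D(m',n')$, whose vertex sets partition $\VV D(m,n)$. I would first record that the sets $\VV F$, $F\in\mathcal F$, indeed tile $\VV D(m,n)$, and that $G$ may be taken to be one of them.

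With the partition in hand the argument is immediate. Let $C$ be an MDS code in $D(m,n)$, so $|C|=4^{2m+n-1}$. For each $F\in\mathcal F$ the set $C\cap\VV F$ is an independent set of $F\cong D(m',n')$, since the edges of the induced subgraph $F$ are edges of $D(m,n)$ and $C$ is independent; hence $|C\cap\VV F|\le 4^{2m'+n'-1}$ by the independence number of a Doob graph recalled in the preliminaries. Summing over the partition,
\[
4^{2m+n-1}=|C|=\sum_{F\in\mathcal F}|C\cap\VV F|\le 4^{(2m+n)-(2m'+n')}\cdot 4^{2m'+n'-1}=4^{2m+n-1},
\]
so every term attains its bound. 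In particular $|C\cap\VV G|=4^{2m'+n'-1}$, that is, $C\cap\VV G$ is a maximum independent set of $G$ and therefore an MDS code in $G$.

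The arithmetic of the count is routine; the only step that needs genuine care is the normalization in the first paragraph, namely that every subgraph isomorphic to $D(m',n')$ is equivalent to a coordinate-fixing one and so embeds in such a partition. Here the useful fact is that no Cartesian power of $\Sh$ contains a $4$-clique while $D(m',n')$ with $n'\ge1$ does, which forces each $K_4$-factor of $G$ to align with a $K_4$-factor of the ambient graph, the Shrikhande factors then being pinned down by the product metric. Since automorphisms of $D(m,n)$ preserve both MDS codes and the independence number, once $G$ is brought to canonical form by an automorphism the displayed counting finishes the proof. I expect this structural normalization of $G$ to be the main obstacle, with the count itself essentially free.
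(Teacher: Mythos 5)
The paper states this lemma without proof, so there is nothing to compare against directly; your double-counting argument is certainly the intended one and its core is correct. The partition of $\VV D(m,n)$ into $4^{(2m+n)-(2m'+n')}$ parallel coordinate-fixing copies of $D(m',n')$, the bound $|C\cap\VV F|\le 4^{2m'+n'-1}$ from independence, and the forced equality in every term of the sum together give a complete and clean proof for coordinate-aligned subgraphs --- which is the only way the lemma is ever invoked in the paper (the sets $M_y$, $C_{\alpha,\beta}$, $M_{\alpha,\beta}$, the ``Shrikhande subgraphs'' and $4$-cliques are all obtained by fixing coordinates).

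The one weak point is the normalization step you yourself identify as the main obstacle. Your justification that every subgraph of $D(m,n)$ isomorphic to $D(m',n')$ is coordinate-aligned is an assertion, not a proof: the clique argument does handle the $K_4$-factors (a $4$-clique in a Cartesian product must lie in a single factor, and $\Sh$ has clique number $3$), but ``the Shrikhande factors then being pinned down by the product metric'' is not an argument --- ruling out exotic embeddings of $\Sh$ into, say, $\Sh\times\Sh$ would require genuine work on the structure of induced subgraphs of Cartesian products. If you read the lemma as quantifying over arbitrary isomorphic subgraphs, this gap is real and your proof is incomplete; if, as the paper's usage makes clear, ``subgraph isomorphic to a Doob graph'' means the canonical subgraphs obtained by fixing entries in some factors, then the normalization is vacuous and your counting finishes the job. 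I would either state the coordinate-aligned hypothesis explicitly or supply an actual argument for the reduction rather than gesturing at the metric.
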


\begin{figure}
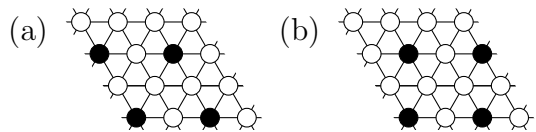

\centering
(a) \raisebox{-12mm}{\sh{nz nz nz nz}{zz nz zz nz}{nz nz nz nz}{zz nz zz nz}}\ 
(b) \raisebox{-12mm}{\sh{nz nz nz nz}{nz zz nz zz}{nz nz nz nz}{zz nz zz nz}}
\caption{All MDS codes in $\Sh$, up to isomorphism}
\label{f:mds}
\end{figure}

A function 
$f:\VV D(m,n) 
\to 
\VV K_4$ 
is called a \emph{latin coloring} 
if the preimage of every value is an MDS code; i.e., no two neighbor vertices have the same colors. In the case $m=0$, 
the latin colorings are known as the \emph{latin hypercubes} (if $n=2$, the \emph{latin squares}) of order $4$.
The case $m=1$, $n=0$ is illustrated in Fig.~\ref{f:latin}.
\begin{figure}
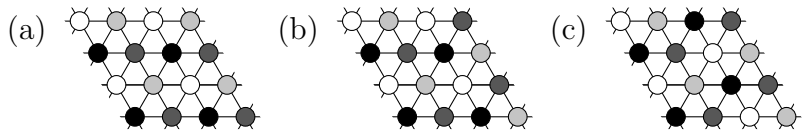

\centering
(a) \raisebox{-12mm}{\sh{nz vz nz vz}{zz wz zz wz}{nz vz nz vz}{zz wz zz wz}}\ 
(b) \raisebox{-12mm}{\sh{nz vz nz wz}{zz wz zz vz}{nz vz nz wz}{zz wz zz vz}}\ 
(c) \raisebox{-12mm}{\sh{nz vz zz wz}{zz wz nz vz}{nz vz zz wz}{zz wz nz vz}}
\caption{All latin colorings of the Shrikhande graph, 
up to isomorphism}
\label{f:latin}
\end{figure}

The \emph{graph}\footnotemark[1]%
\footnotetext[1]{We will use this footnote mark 
to separate this notion from another 
meaning of the word ``graph'' 
(a pair of a set of vertices and a set of edges).} 
$\{ (y_1,...,y_m,x_1,...,x_n,x_0) \mid 
x_0 = f(y_1,...,y_m,x_1,...,x_n) \}$
of a latin coloring $f$ is always an MDS code. Inversely, any MDS code in
$D(m,n+1)$ is a graph\footnotemark[1] of a latin coloring of $D(m,n)$
(however, the MDS codes in $D(m,0)$ cannot be represented in such a manner).
Moreover, if $f$ and $f'$ are latin colorings of 
$D(m,n)$ and $D(m',n')$, then the set
\begin{equation}\label{eq:redu}
\{ (\bar y,\bar y',\bar x, \bar x') 
\in \VV D(m'+m,n'+n)
\mid 
f(\bar y,\bar x) = f'(\bar y',\bar x') \}
\end{equation}
is an MDS code in $D(m'+m,n'+n)$.
If $2m+n>1$ and $2m'+n'>1$, then 
the MDS code (\ref{eq:redu}) is called
\emph{reducible}, as well as all codes 
obtained from it by coordinate permutation.

A set $M$ of vertices of $D(m,n)$ is called a 
\emph{\MMDS code} (two-fold MDS code) if every Shrikhande subgraph of $D(m,n)$ intersects with $M$ in $8$ vertices
that form two disjoint MDS codes in $\Sh$ (see Fig~\ref{f:2mds}) and every clique of order $4$ contains exactly $2$ elements of $M$.
The union of two disjoint MDS codes is always a \MMDS code; such a \MMDS code will be called \emph{bipartite}.
The complement $\VV D(m,n) \backslash M$ of any \MMDS code $M$ is also a \MMDS code, which will be denoted by $\overline M$.
However, it is not known if the complement of any bipartite \MMDS code is also a bipartite \MMDS code
(the known solution \cite{Pot:2012:partial} for $D(0,n)$ is far from being easy).

A connected component of a subgraph of $D(m,n)$ induced by a \MMDS code $M$ will be called a \emph{component} (of $M$).
A \MMDS code is called \emph{connected} if it is a component itself.
The union of (one or more) components of the same \MMDS code will be referred to as a \emph{multicomponent}.

A \MMDS code is called \emph{decomposable} (\emph{indecomposable}) if its characteristic function
can (cannot)  be represented as a modulo-$2$ sum of two $0,1$-functions in disjoint nonempty collections of variables.
It follows that these functions are the characteristic functions of \MMDS codes in Doob graphs, which, in their turns, 
can be decomposable or not. 
As a result, 
the characteristic function 
of any \MMDS code 
is a sum of the characteristic functions
of one, two, or more (up to $m+n$) indecomposable \MMDS codes. 
With some natural assumptions, 
such a representation is unique:

\begin{figure}
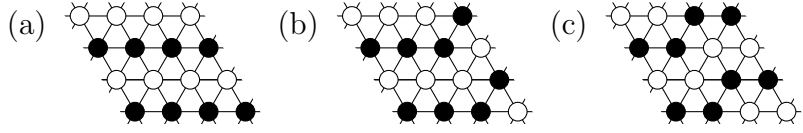

\centering
(a) \raisebox{-12mm}{\sh{nz nz nz nz}{zz zz zz zz}{nz nz nz nz}{zz zz zz zz}}\ 
(b) \raisebox{-12mm}{\sh{nz nz nz zz}{zz zz zz nz}{nz nz nz zz}{zz zz zz nz}}\ 
(c) \raisebox{-12mm}{\sh{nz nz zz zz}{zz zz nz nz}{nz nz zz zz}{zz zz nz nz}}
\caption{All \MMDS codes in the Shrikhande graph, up to isomorphism}
\label{f:2mds}
\end{figure}

\begin{lemma}\label{l:can2MDS}
The characteristic function $\chi_M$ 
of any \MMDS code $M\subset \VV D(m,n)$ 
has a unique representation in the form
\begin{equation}\label{eq:can2MDS}
\chi_M(\bar x,\bar y) \equiv 
\chi_{M_1} (\tilde x_1,\tilde y_1) + \ldots + 
\chi_{M_k}(\tilde x_k,\tilde y_k) + \sigma \bmod 2
\end{equation}
where 
\begin{itemize}
\item[$\bullet$]
$\bar x = (x_1,...,x_m) \in \Sh^m$; 

\item[$\bullet$] 
$\bar y = (y_1,...,y_n) \in K^n$; 

\item[$\bullet$] 
$\tilde x_i = (x_{j_{i,1}},...,x_{j_{i,m_i}}) \in \Sh^{m_i}$, $i=1,...,k$;

$m = \sum_{1}^{k} m_i$;

$\{1,...,m\} = \{j_{1,1},...,j_{1,m_1},\ldots,j_{k,1},...,j_{k,m_k}\}$;

\item[$\bullet$] 
$\tilde y_j = (y_{l_{i,1}},...,y_{l_{i,n_i}}) \in K^{n_i}$, $i=1,...,k$;

$n = \sum_{1}^{k} n_i$;

$\{1,...,n\} = \{l_{1,1},...,l_{1,n_1},\ldots,l_{k,1},...,l_{k,n_k}\}$;

\item[$\bullet$] 
for all $i\in\{1,...,k\}$ it holds  $m_i+n_i \ge 1$ and $M_i$ is an indecomposable \MMDS code 
not containing the all-zero tuple;
\item[$\bullet$] 
$\sigma \in\{0,1\}$.
\end{itemize}
In this notation, $M$ is indecomposable if and only if $k=1$. 
\end{lemma}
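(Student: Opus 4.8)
The discussion preceding the lemma already produces, for any $\chi_M$, a modulo-$2$ sum of characteristic functions of indecomposable \MMDS codes in disjoint groups of variables, so the real content of the statement is the normalization and the uniqueness. For the normalization, the plan is to start from such a sum and adjust the constants: whenever an indecomposable factor $M_i$ contains the all-zero tuple, replace it by its complement $\overline{M_i}$. The complement of a \MMDS code is a \MMDS code; it is indecomposable exactly when $M_i$ is, since a nontrivial splitting of $\chi_{\overline{M_i}}=\chi_{M_i}+1$ yields, after absorbing the constant $1$ into one summand, a nontrivial splitting of $\chi_{M_i}$; and it no longer contains the origin. Each such replacement changes the total sum by $1$, which I collect into the single constant $\sigma$. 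This produces a representation of the required shape.

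For uniqueness I would pass to the canonical interaction (ANOVA-type) expansion of an arbitrary function $F\colon \VV D(m,n)\to\{0,1\}$ with the origin as reference point: $F\equiv\sum_{S}F_S \bmod 2$, where $S$ ranges over the subsets of the index set $\{1,\dots,m+n\}$, the term $F_S$ depends only on the coordinates in $S$, and $F_S$ vanishes as soon as some coordinate in $S$ takes the reference value $0$. This expansion exists and is unique, the terms being recovered by inclusion--exclusion over the restrictions of $F$ to the faces through the origin. The key observation is that $F$ is a modulo-$2$ sum of functions supported on the blocks of a partition $\pi$ if and only if every support $S$ with $F_S\not\equiv 0$ is contained in a single block of $\pi$. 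Consequently the decomposable partitions of $F$ are exactly the partitions coarser than the partition $\pi^\ast(F)$ into the connected components of the graph in which two indices are joined whenever they lie together in some support $S$ with $F_S\not\equiv 0$; in particular $\pi^\ast(F)$ is the unique finest decomposable partition, and it is determined by $F$ alone.

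Applying this to $F=\chi_M$, each representation of the form (\ref{eq:can2MDS}) yields a decomposable partition $\pi$ whose blocks are the variable groups of the factors. Since every factor $M_i$ is indecomposable, its block cannot be split, i.e. the subgraph that the support-graph of $\chi_M$ induces on that block is connected, so the block is a single connected component; hence both partitions coincide with $\pi^\ast(\chi_M)$, which settles the uniqueness of the grouping of the variables. To fix the remaining data, I would evaluate (\ref{eq:can2MDS}) at the all-zero tuple: since no $M_i$ contains it, this forces $\sigma=\chi_M(\bar 0)$; and modulo this constant each factor must equal the sum of exactly those interaction terms $F_S$ with $S$ inside its block, which is uniquely determined and automatically vanishes at the origin. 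Thus the factors $M_i$, up to reindexing, and the constant $\sigma$ are unique.

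The existence/normalization and the constant bookkeeping are routine; the step I expect to require genuine care is the interaction machinery of the second paragraph, namely proving that the expansion $F\equiv\sum_S F_S$ is well defined and unique over $\mathbb{Z}_2$ and, crucially, that additive decomposability over a partition is equivalent to the containment of all nonzero supports in single blocks. An equivalent route, avoiding the explicit expansion, is to show directly that the family of decomposable partitions of a fixed function is closed under common refinement (which already guarantees a unique finest element); I expect the difference-operator computation behind that closure to be the only essentially technical point, the three-variable case $g(z_1,z_2)+a(z_3)\equiv b(z_1)+h(z_2,z_3)$ being the prototype.
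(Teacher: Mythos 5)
Your proof is correct. The paper itself offers no argument for this lemma --- it is introduced with ``It is easy to see that, with some natural assumptions, such a representation is unique'' --- so there is nothing to compare against line by line; what you have done is supply the missing proof in full. Your route, via the M\"obius (algebraic-normal-form) expansion $F\equiv\sum_S F_S$ anchored at the origin, is a standard and perfectly valid way to get both halves: the normalization step (complementing any factor containing the origin and absorbing the resulting $1$'s into $\sigma$) is exactly right, including the observation that complementation preserves indecomposability; and the equivalence ``$F$ decomposes over $\pi$ iff every nonzero support lies in a block'' does yield that every representation of the form (\ref{eq:can2MDS}) must use the blocks of the unique finest decomposable partition, after which $\sigma=\chi_M(\bar 0)$ and each $\chi_{M_i}=\sum_{\emptyset\ne S\subseteq B_i}F_S$ are forced. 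The one point worth making explicit is the degenerate case in the step ``indecomposable $\Rightarrow$ the block is a single component of the support graph'': an index on which $\chi_{M_i}$ does not depend would be isolated in the support graph, but such an index cannot occur, since the restriction of a \MMDS code to a maximal clique in that coordinate must contain exactly half of its vertices, so $\chi_{M_i}$ is non-constant in every one of its variables (and if the block had two or more indices with one isolated, $M_i$ would already be decomposable by the paper's definition, which only requires the two variable collections to be nonempty). Your alternative suggestion --- proving directly that decomposable partitions are closed under common refinement, the two-overlapping-blocks identity being the only computation --- is probably the ``easy to see'' argument the authors had in mind; it avoids the expansion machinery but proves strictly less (it does not hand you the factors explicitly), whereas your expansion gives the factors and the constant for free.
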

\begin{proof}
We first note that the graph structure is not essential in this lemma, because the adjacency is not used.
Next, the statement of the lemma holds for any vertex set $M$, not only for a \MMDS code.
It was proven in \cite[Lemma~A.4]{Kro:2codes}, but formally, only for the case when all the arguments take their values from the same set
(in our context, this is the case if $n=0$). 
However, this restriction is not essential for the proof, and that proof can be referred for the general case
(rewriting all arguments here is not reasonable; they are not difficult but accurate arguing takes some place).
Another way is to satisfy this restriction by some artificial trick, see the proof of Lemma~\ref{l:K16K4} below. 
\end{proof}

Obviously, an \MMDS code $M$ is decomposable if and only if $k>1$ in the representation (\ref{eq:can2MDS}).


\section[Linear 2 x MDS codes and semilinear MDS codes]{Linear \MMDS codes and semilinear MDS codes}\label{s:lin-slin}

We will say that a \MMDS code is \emph{linear} if $k=m+n$ in the representation (\ref{eq:can2MDS}), i.e.,
all $M_i$ are \MMDS codes in $\Sh$ or $K$, and, moreover, the ones that are in $\Sh$ are not connected (Fig. \ref{f:2mds}(a)). 
It is easy to see that all linear \MMDS codes in a given $D(m,n)$ are equivalent.

We will say that an MDS code is \emph{semilinear} if it is a subset of a linear \MMDS code.
In the rest of this section, after auxiliary statements, we evaluate the number of semilinear MDS codes.
We omit the proof of the next lemma as it is obvious.
\begin{lemma}\label{l:No-comp}
Let a characteristic function $\chi_M$ of a decomposable \MMDS code $M$ has a decomposition
$$ 
\chi_M(\bar x,\bar y) \equiv 
\chi_{M_1} (\tilde x_1,\tilde y_1) + 
\chi_{M_2}(\tilde x_2,\tilde y_2) + \sigma \bmod 2
$$ 
where each $M_i$, $i=1,2$, as well as its complement, has $N_i$ components.
Then $M$, as well as its complement, has $2 N_1 N_2$ components.
\end{lemma}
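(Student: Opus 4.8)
The plan is to reduce the statement to the multiplicativity of the number of connected components under the Cartesian product. By the meaning of the decomposition, the graph $D(m,n)$ is the Cartesian product of the two Doob graphs carrying the variable blocks $(\tilde x_1,\tilde y_1)$ and $(\tilde x_2,\tilde y_2)$; write each vertex as a pair $v=(v_1,v_2)$ with $v_1\in\VV D(m_1,n_1)$ and $v_2\in\VV D(m_2,n_2)$. Since $\chi_M(v)\equiv\chi_{M_1}(v_1)+\chi_{M_2}(v_2)+\sigma \pmod 2$, membership in $M$ depends only on the pair of values $(\chi_{M_1}(v_1),\chi_{M_2}(v_2))$, so $M$ is a union of two ``rectangular'' product sets: if $\sigma=1$ then $M=(M_1\times M_2)\cup(\overline{M_1}\times\overline{M_2})$, and if $\sigma=0$ then $M=(M_1\times\overline{M_2})\cup(\overline{M_1}\times M_2)$. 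In each case the complement $\overline M$ is the union of the two rectangular sets corresponding to the remaining pair of values.

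First I would verify that inside the subgraph induced by $M$ no edge joins the two rectangular pieces. In a Cartesian product an edge leaves exactly one coordinate unchanged, so adjacent $v=(v_1,v_2)$ and $w=(w_1,w_2)$ satisfy $v_1=w_1$ or $v_2=w_2$. However, passing from one of our rectangular pieces to the other flips both values $\chi_{M_1}$ and $\chi_{M_2}$ (the two value-pairs are $(1,1),(0,0)$ when $\sigma=1$, and $(1,0),(0,1)$ when $\sigma=0$); hence $v_1\ne w_1$ and $v_2\ne w_2$, contradicting adjacency. Therefore the components of $M$ are exactly the components of its two rectangular pieces taken together, and the same conclusion holds verbatim for $\overline M$.

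Next I would count the components of a single rectangular piece $A\times B$, where $A\in\{M_1,\overline{M_1}\}$ and $B\in\{M_2,\overline{M_2}\}$. The key observation is that the subgraph of the Cartesian product induced by $A\times B$ is precisely the Cartesian product of the induced subgraphs on $A$ and on $B$: an edge inside $A\times B$ either fixes the first coordinate (in $A$) and uses an edge of the subgraph induced by $B$, or symmetrically. Since the connected components of a Cartesian product of two graphs are exactly the products of the components of the factors, the number of components of $A\times B$ is the product of the numbers of components of $A$ and of $B$. By hypothesis each of $M_1,\overline{M_1}$ has $N_1$ components and each of $M_2,\overline{M_2}$ has $N_2$ components, so every rectangular piece contributes $N_1N_2$ components.

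Adding the contributions of the two pieces gives $N_1N_2+N_1N_2=2N_1N_2$ components for $M$, and the identical computation for the complementary pair of rectangular pieces gives $2N_1N_2$ for $\overline M$. The only genuine content — and the reason the statement can be called obvious — lies in the two structural facts about Cartesian products used above (the induced subgraph on a rectangle is the product of the induced subgraphs, and components multiply); once these are recorded, the separation argument and the final count are immediate.
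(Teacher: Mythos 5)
Your proof is correct, and it fills in precisely the argument the authors had in mind: the paper omits the proof of this lemma as ``obvious,'' and the intended content is exactly the reduction you give — $M$ splits into two rectangular pieces of the Cartesian product $D(m_1,n_1)\,\Box\,D(m_2,n_2)$ with no edges between them, and components of a product of induced subgraphs multiply. The only hypothesis worth flagging as essential (and which you do use) is that $M_i$ and $\overline{M_i}$ have the \emph{same} number $N_i$ of components, since each of the two rectangular pieces mixes a set with a complement.
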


\begin{lemma}\label{l:2mds-mds}
Let $M$ be a \MMDS code in $D(m,n)$, and let $N$ be the number of components in $M$.

a) If $M$ is a linear \MMDS code then $N=2^{2m+n-1}$.

b) If $M$ is bipartite, then the number of MDS codes included in $M$ is $2^N$.
\end{lemma}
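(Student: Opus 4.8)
The plan is to treat the two parts separately, using the decomposition machinery of Lemma~\ref{l:No-comp} for (a) and a rigidity argument for (b).

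For part (a), I would peel the $m+n$ indecomposable factors of a linear \MMDS code off one at a time and apply Lemma~\ref{l:No-comp} repeatedly. The two single-factor base cases are the only inputs: the disconnected \MMDS code in $\Sh$ (Fig.~\ref{f:2mds}(a)) and its complement each split into two vertex-disjoint $4$-cycles, hence have $N=2$ components, while a \MMDS code in $K=K_4$ and its complement are single edges, hence have $N=1$ component. Because Lemma~\ref{l:No-comp} produces the component count of a code and of its complement simultaneously, the hypothesis ``each summand and its complement have the same number of components'' is preserved at every step, so the induction goes through. Combining a partial code having $P$ components with a new factor having $a\in\{1,2\}$ components replaces $P$ by $2Pa$; starting from the first factor and inserting the $m$ Shrikhande factors ($a=2$) and the $n$ clique factors ($a=1$) yields $P=2^{(m+n)-1}\cdot 2^{m}\cdot 1^{n}=2^{2m+n-1}$, as claimed.

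For part (b), write $M=A_0\cup B_0$ as a disjoint union of two MDS codes. Since $A_0$ and $B_0$ are independent, every edge of the subgraph $G$ induced by $M$ joins $A_0$ to $B_0$; thus $G$ is bipartite, and inside each of its $N$ connected components $C_1,\dots,C_N$ the bipartition into $X_i:=A_0\cap C_i$ and $Y_i:=B_0\cap C_i$ is unique. I would then prove the key rigidity claim: \emph{if $A\subseteq M$ is an MDS code, then $M\setminus A$ is an MDS code as well.} Granting this, $(A,\,M\setminus A)$ is a second partition of $M$ into independent sets, so its restriction to each $C_i$ partitions the connected bipartite graph $C_i$ into two independent sets; by uniqueness of the bipartition this forces $A\cap C_i\in\{X_i,Y_i\}$. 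Hence every MDS code in $M$ is obtained by selecting one colour class in each component, giving at most $2^N$ codes; conversely each of the $2^N$ selections is independent (the colour classes are independent and distinct components share no edges) and, provided $|X_i|=|Y_i|$ for all $i$, has the correct cardinality $4^{2m+n-1}$, hence is an MDS code.

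The two facts the argument rests on --- the rigidity claim and the balance $|X_i|=|Y_i|$ --- both localise to a single Shrikhande subgraph, and this localisation is where the real work lies. Every edge of $D(m,n)$ lies either in a clique of order $4$ or in a Shrikhande subgraph $S$. For a clique $\kappa$ one has $|M\cap\kappa|=2$ and $|A\cap\kappa|=1$, so $|(M\setminus A)\cap\kappa|=1$; for a Shrikhande subgraph $S$ the set $A\cap S$ is an MDS code of $S$ by Lemma~\ref{l:subgraph}, so $(M\setminus A)\cap S=(M\cap S)\setminus(A\cap S)$ is the complement, inside a \MMDS code of $\Sh$, of an MDS code of $\Sh$. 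Thus both claims reduce to the single assertion that, in $\Sh$, the subgraph induced by any \MMDS code is a disjoint union of even cycles (equivalently, is $2$-regular and bipartite): this gives at once that every vertex has exactly two neighbours inside the code in each Shrikhande factor (whence $G$ is $(2m+n)$-regular and each $C_i$ is balanced) and that the complement of any MDS subcode is again an MDS subcode (the two colour classes of an even cycle). This reduction is routine to verify on the three \MMDS codes of $\Sh$ listed in Fig.~\ref{f:2mds} --- type (a) splits into two $4$-cycles, while types (b) and (c) each form a single $8$-cycle --- and completing this finite check is the only genuine obstacle; everything above it is bookkeeping.
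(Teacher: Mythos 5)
Your proof is correct and follows essentially the same route as the paper: part (a) by iterating Lemma~\ref{l:No-comp} over the $m+n$ factors, and part (b) by observing that the subgraph induced by $M$ is bipartite and that the MDS codes inside $M$ correspond exactly to choices of one colour class in each of the $N$ components. The paper's own proof of (b) is much terser --- it simply asserts that a part of $D_M$ is an MDS code and counts the $2^N$ choices --- so your rigidity claim ($M\setminus A$ is again independent), the balance $|X_i|=|Y_i|$ via $(2m+n)$-regularity, and the finite check that the three \MMDS codes of $\Sh$ induce disjoint unions of even cycles are precisely the details it leaves to the reader.
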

\begin{proof}
a) The statement follows by induction from Lemma~\ref{l:No-comp}.

b)  As follows from the hypothesis, the subgraph $D_M$ of $D(m,n)$ induced by $M$ is bipartite. 
By the definition of an MDS code, a part of $D_M$ is an MDS code. 
To choose a part of $D_M$, one should independently choose one of two parts of every connected components of $D_M$. 
So, the number of ways is $2^N$.
\end{proof}

\begin{lemma}\label{l:No-lin}
 The number of linear \MMDS codes in $D(m,n)$ is $2\cdot 3^{m+n}$.
\end{lemma}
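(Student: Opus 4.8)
The plan is to turn the count into a product of purely local contributions via the canonical decomposition of Lemma~\ref{l:can2MDS}. By definition a linear \MMDS code is one whose representation (\ref{eq:can2MDS}) has $k=m+n$: each Shrikhande coordinate $x_i$ carries an (automatically indecomposable) non-connected \MMDS code in $\Sh$ not containing the zero vertex, each clique coordinate $y_j$ carries a \MMDS code in $K$ not containing the zero vertex, and $\sigma\in\{0,1\}$. Since Lemma~\ref{l:can2MDS} makes this representation unique, sending $M$ to the tuple of its pieces together with $\sigma$ is a bijection between linear \MMDS codes and such tuples. Hence the desired number equals $2\cdot a^{m}\cdot b^{n}$, where $a$ is the number of non-connected \MMDS codes in $\Sh$ avoiding the zero vertex and $b$ the number of \MMDS codes in $K$ avoiding the zero vertex; it remains to prove $a=b=3$.

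The clique factor is immediate: a \MMDS code in $K=K_4$ is simply a two-element subset of its four vertices (the Shrikhande clause is vacuous, and the clique clause forces size two), so those avoiding the zero vertex are the three two-element subsets of the remaining three vertices, giving $b=3$.

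For the Shrikhande factor I would first describe the MDS codes of $\Sh$ explicitly. Identifying $\VV\Sh=Z_4^2$, a short computation (counting triangles among the non-neighbours of the origin) shows the $16$ MDS codes are exactly the cosets of the four index-$4$ subgroups of $Z_4^2$ — the $2$-torsion subgroup $\{0,2\}^2$ and the three cyclic subgroups of order $4$ — four cosets apiece. Next, because every MDS code is completely regular (each outside vertex has exactly two neighbours in it), the bipartite graph joining two disjoint MDS codes $A,B$ is $2$-regular; as $A$ and $B$ are independent, $A\cup B$ is therefore either an $8$-cycle (connected) or a disjoint pair of chordless $4$-cycles (disconnected, with two components). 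I would then classify the chordless $4$-cycles of $\Sh$: each vertex lies in exactly three of them, namely its cosets of the three order-$4$ cyclic subgroups $C_1,C_2,C_3$ generated by the elements of the connecting set, so there are $12$ in all and each is a coset of some $C_j$. Since cosets of distinct $C_j$ always meet, the two components of a non-connected code are cosets of the same $C_j$, and testing which coset pairs genuinely split into two MDS codes leaves exactly two pairs per $C_j$. These six sets are precisely the six cosets of the three index-$2$ subgroups of $Z_4^2$; of the two cosets of each such subgroup exactly one avoids the zero vertex, whence $a=3$. Substituting $a=b=3$ into $2\cdot a^{m}\cdot b^{n}$ gives $2\cdot 3^{m+n}$.

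The main obstacle is this last Shrikhande-level analysis. Both the enumeration of the MDS codes and the classification of chordless $4$-cycles reduce to finite but delicate case work with the connecting set $\{01,10,11,03,30,33\}$, and one must be careful on two points: that ``disconnected'' here means ``two $4$-cycles'' (which the $2$-regularity argument guarantees), and that a candidate union of two disjoint $4$-cycles is accepted only when it actually decomposes into two MDS codes, not merely because it is a pair of disjoint cycles. Everything else is bookkeeping around the unique decomposition.
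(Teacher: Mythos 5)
Your proof is correct and takes essentially the same route as the paper: both use the uniqueness of the canonical decomposition (Lemma~\ref{l:can2MDS}) to reduce the count to $2\cdot a^{m}b^{n}$ with $a$, $b$ the numbers of admissible local factors, the only difference being that the paper reads off $a=b=3$ directly from its classification of \MMDS codes in $\Sh$ and $K$ (Fig.~\ref{f:2mds}) while you rederive the Shrikhande count from scratch. One small wording slip in that rederivation: $Z_4^2$ has six cyclic subgroups of order four (seven subgroups of index four in all), of which only three are independent sets of $\Sh$; your total of sixteen MDS codes, and everything downstream of it, is nevertheless right.
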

\begin{proof}
Every linear \MMDS codes can be represented as (\ref{eq:can2MDS}) where for each $i\in\{1,2,...,k=m+n\}$ the set 
$M_i$ is a \MMDS code in $\Sh$ equivalent to the one in Fig.~\ref{f:2mds}(a) or a \MMDS code in $K$ and, moreover, 
$00\not\in M_i$.
In any case, $M_i$ can be chosen in $3$ ways. 
Since $\sigma$ can be chosen in $2$ ways,
we have totally $2\cdot 3^{m+n}$ ways to specify a linear \MMDS code.
\end{proof}

\begin{lemma}\label{l:No-semi}
 The number of semilinear  MDS codes in $D(m,n)$  is 
 $$2\cdot 3^{m+n}\cdot 2^{2^{2m+n-1}}(1+o(1)) \quad\mbox{ as } m+n \to \infty.$$
\end{lemma}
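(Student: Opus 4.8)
The plan is to count the semilinear MDS codes through the incidences $(M,C)$, where $M$ is a linear \MMDS code and $C\subseteq M$ is an MDS code, and then to show that this incidence count agrees with the number of distinct codes up to a factor $1+o(1)$. By Lemma~\ref{l:No-lin} there are $2\cdot 3^{m+n}$ choices of $M$. First I would record the algebraic normal form of a linear \MMDS code: its characteristic function is $\chi_M \equiv L\circ\pi + c \bmod 2$, where $\pi$ reduces every Shrikhande coordinate modulo $2$ and is the identity on the $K$-coordinates, mapping $\VV D(m,n)$ onto $(Z_2)^{2(m+n)}$, and $L$ is a $Z_2$-linear functional that restricts to a nonzero functional on each of the $m+n$ coordinate blocks. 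This is just the observation that the admissible constituents $M_i$ — the code of Fig.~\ref{f:2mds}(a) in a Shrikhande factor and a $2$-element code in a $K$-factor, none containing the zero tuple — are exactly the nonzero $Z_2$-linear conditions. A linear \MMDS code is then bipartite: a second functional $\gamma$, independent of $L$ in each block, gives a proper $2$-colouring of the induced subgraph whose two colour classes are disjoint maximum independent sets, hence two disjoint MDS codes. Lemma~\ref{l:2mds-mds}(a) gives $N=2^{2m+n-1}$ components, so Lemma~\ref{l:2mds-mds}(b) makes each linear \MMDS code contain exactly $2^{N}=2^{2^{2m+n-1}}$ MDS codes. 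The incidence count is therefore $2\cdot 3^{m+n}\cdot 2^{2^{2m+n-1}}$, and since every semilinear MDS code appears in at least one incidence, this is an upper bound for their number.

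The content of the lemma is that almost every semilinear code lies in a unique linear \MMDS code. Here I would bound, for two distinct linear \MMDS codes $M=\{L\circ\pi=c\}$ and $M'=\{L'\circ\pi=c'\}$, the number of MDS codes common to both. If $L=L'$ then $c\neq c'$ and $M\cap M'=\emptyset$. If $L\neq L'$, then $L$ and $L'$ are distinct nonzero functionals over $Z_2$ and hence linearly independent, so $(L,L')\circ\pi$ is a surjective homomorphism onto $Z_2^2$; each of its four fibres has size $4^{2m+n}/4=4^{2m+n-1}$, and $M\cap M'$ is one such fibre. As $4^{2m+n-1}$ is exactly the cardinality of an MDS code, any MDS code inside $M\cap M'$ must coincide with $M\cap M'$; thus $M$ and $M'$ share at most one MDS code.

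To finish I would apply the Bonferroni inequalities to the union $\bigcup_M \mathrm{MDS}(M)$ over all linear \MMDS codes $M$, where $\mathrm{MDS}(M)$ is the family of MDS codes contained in $M$. The leading term is the incidence count $2\cdot 3^{m+n}\cdot 2^{2^{2m+n-1}}$, and the correction is at most $\sum_{\{M,M'\}}|\mathrm{MDS}(M)\cap\mathrm{MDS}(M')|$, which by the previous step is bounded by the number of unordered pairs, i.e.\ fewer than $(2\cdot 3^{m+n})^2=4\cdot 9^{m+n}$. Since $4\cdot 9^{m+n}/(2\cdot 3^{m+n}\cdot 2^{2^{2m+n-1}})=2\cdot 3^{m+n}/2^{2^{2m+n-1}}\to 0$ as $m+n\to\infty$ (a single exponential over a double exponential), the correction is $o(2\cdot 3^{m+n}\cdot 2^{2^{2m+n-1}})$, and the claimed asymptotics follow.

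The hard part will be the algebraic set-up rather than the final estimate: one must pin down the normal form $\chi_M\equiv L\circ\pi+c$ and verify that the allowed constituents in $\Sh$ and in $K$ are precisely the nonzero $Z_2$-functionals, and then establish the intersection identity $|M\cap M'|=4^{2m+n-1}$ for $L\neq L'$, which is what forces the shared MDS code to be unique. Once this linear picture is secured, the bipartiteness of $M$, the per-code MDS count from Lemma~\ref{l:2mds-mds}, and the crude pair bound are routine, and the doubly-exponential gap between $2^{2^{2m+n-1}}$ and $9^{m+n}$ makes the $o(1)$ term immediate.
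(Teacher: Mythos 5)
Your proposal is correct and follows essentially the same route as the paper: multiply the count of linear \MMDS codes (Lemma~\ref{l:No-lin}) by the $2^{2^{2m+n-1}}$ MDS codes each contains (Lemma~\ref{l:2mds-mds}), and then observe that two distinct linear \MMDS codes intersect in a set of size at most $4^{2m+n-1}$ and hence share at most one MDS code, so the overcount is negligible. You merely spell out more explicitly what the paper compresses into one sentence (the coset/linear-functional structure, the bipartiteness of a linear \MMDS code, and the final Bonferroni estimate), with no substantive difference in the argument.
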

\begin{proof} There are $2\cdot 3^{m+n}$ linear \MMDS codes (Lemma~\ref{l:No-lin}), 
each including $2^{2^{2m+n-1}}$ semilinear MDS codes (Lemma~\ref{l:2mds-mds}).
 It remains to understand that almost every semilinear MDS code is included in only one linear \MMDS code.
 One of simple explanations of this fact is that two different linear \MMDS codes $M'$, $M''$ 
 can include at most one common MDS code. 
 Indeed each of  $M'$, $M''$ is a coset of a subgroup of $(Z_4^2)^m\times (Z_2^2)^n$, 
 so the cardinality of their intersection cannot be larger than $|M'|/2=|M''|/2=4^{2m+n-1}$,
 which is the cardinality of an MDS code. 
\end{proof}

\section{Main results}\label{s:res}
We are now ready to formulate the main result of the paper, 
which will be proven in the next two sections.  

\begin{theorem}\label{th:main}
Every MDS code in $D(m,n)$ is semilinear or reducible.
\end{theorem}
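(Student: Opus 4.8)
The plan is to argue by induction on the number $m$ of Shrikhande factors, peeling off one Shrikhande coordinate at each step. The base case $m=0$ is exactly the characterization of the distance-$2$ MDS codes in the Hamming graph $D(0,n)=H(n,4)$ established in \cite{KroPot:4}. So assume $m\ge 1$, suppose the statement holds for $D(m-1,n)$, and let $C$ be an MDS code in $D(m,n)$. The goal is to show that if $C$ is not reducible then it is semilinear, i.e.\ to exhibit a linear \MMDS code containing $C$.

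First I would slice $C$ along the first Shrikhande coordinate. By Lemma~\ref{l:subgraph}, for each $v\in\VV\Sh$ the slice $C_v:=\{\,\bar z\mid (v,\bar z)\in C\,\}$ is an MDS code in $D(m-1,n)$, while dually the fibre $\{\,v\mid (v,\bar z)\in C\,\}$ over any $\bar z$ is an MDS code in $\Sh$ (one of the two types of Fig.~\ref{f:mds}), and $C$ meets every order-$4$ clique in exactly one vertex. In particular $C_v\cap C_{v'}=\emptyset$ whenever $v\sim v'$ in $\Sh$, and a direct application of Lemma~\ref{l:subgraph} shows that for adjacent $v\sim v'$ the union $C_v\cup C_{v'}$ meets every Shrikhande subgraph of $D(m-1,n)$ in two disjoint MDS codes and every clique in two vertices; hence each such $C_v\cup C_{v'}$ is a bipartite \MMDS code in $D(m-1,n)$. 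By the induction hypothesis each slice $C_v$ is semilinear or reducible.

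The heart of the argument is to glue these sixteen slices back together. Each assembled code $C_v\cup C_{v'}$ has a canonical decomposition by Lemma~\ref{l:can2MDS}, whose factors record how $C$ splits over the remaining coordinates, and the component counts of these codes are governed by Lemmas~\ref{l:No-comp} and~\ref{l:2mds-mds}. The dichotomy I expect is the following. If every factor occurring in these decompositions is a single-coordinate factor of the disconnected type (Fig.~\ref{f:2mds}(a) in a Shrikhande coordinate, or the \MMDS code of $K$ in a clique coordinate), then the slices assemble into a single \emph{linear} \MMDS code of $D(m,n)$ containing $C$, so $C$ is semilinear. If instead some indecomposable factor spanning $m_i+n_i\ge 2$ coordinates, or a \emph{connected} Shrikhande factor (Fig.~\ref{f:2mds}(b,c)), occurs consistently across the fibres, then this common factor is a genuine coordinate split that realises $C$ in the reducible form (\ref{eq:redu}); hence $C$ is reducible. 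Either outcome closes the induction.

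The main obstacle is precisely this gluing step: a priori the sixteen slices might be semilinear or reducible \emph{in mutually incompatible ways}, and one must prove that non-reducibility of $C$ forces all their local descriptions to agree and to be of the disconnected single-coordinate (linear) type. This is where $q=4$ is genuinely special, since, unlike in the Hamming case, a \emph{connected} \MMDS code in $\Sh$ can occur (Fig.~\ref{f:2mds}(b,c)); showing that the two MDS types of Fig.~\ref{f:mds} cannot be combined over adjacent slices without producing either the global linear structure or a coordinate split is the crux. I would carry this out by a careful pairwise analysis of adjacent slices over each pair of coordinates, invoking the connection between MDS and \MMDS codes proved in Section~\ref{s:key} (which generalizes the case $m=0$ of \cite{Kro:2codes}) to convert the resulting local rigidity into the required global linear-or-split conclusion.
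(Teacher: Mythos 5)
Your setup is sound: slicing $C$ along a Shrikhande coordinate, observing that the slices $C_v$ are MDS codes in $D(m-1,n)$ and that $C_v\cup C_{v'}$ is a bipartite \MMDS code for adjacent $v\sim v'$, is all correct and in the spirit of the paper. But the step you yourself identify as ``the main obstacle'' --- proving that the sixteen local descriptions are mutually consistent and assemble either into one linear \MMDS code of $D(m,n)$ or into a single coordinate split --- is precisely where the entire mathematical content of the theorem lies, and your proposal does not carry it out. The induction hypothesis (each $C_v$ is semilinear or reducible) gives essentially no leverage here: two adjacent slices can each be semilinear with respect to \emph{different} linear \MMDS codes, or reducible along \emph{different} partitions of the coordinates, and nothing in your outline rules this out or shows that non-reducibility of $C$ forces agreement. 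Even in your ``all factors linear'' branch, you would still need to show that all sixteen slices determine the \emph{same} linear \MMDS code of $D(m-1,n)$ and that the first coordinate enters through a disconnected factor (Fig.~\ref{f:2mds}(a)); neither is automatic. In the other branch, ``occurs consistently across the fibres'' is asserted, not proved.

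The paper closes this gap by a different device, with no induction on $m$ and no gluing of sixteen slices. It looks at the fibres $C_{\alpha,\beta}\subseteq\VV\Sh$ over the \emph{remaining} coordinates: if all of them are of the type of Fig.~\ref{f:mds}(a), then $C$ is reducible outright (the first coordinate splits off). Otherwise it forms the single translate $M=C\cup(C+(01,00,\dots,00))$, which is a \MMDS code containing $C$, and shows by the propagation argument of Lemmas~\ref{l:nei} and~\ref{l:Sh-to-K} that $M$ is decomposable with $x_1$ separated. Proposition~\ref{p:key} (decomposable $\Leftrightarrow$ disconnected) then turns the canonical decomposition (\ref{eq:can2MDS}) of $M$ into a global structure theorem: every indecomposable factor is either a disconnected single-$\Sh$-coordinate factor or connected, hence (being bipartite, since it contains an MDS code) the graph of a latin coloring; reading off whether all factors are of the first kind gives semilinear versus reducible in one stroke. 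If you want to salvage your slice-gluing plan, the consistency you need is exactly what Lemma~\ref{l:Sh-to-K} provides, but at that point you are reconstructing the paper's argument; as written, your proposal is a plan with its decisive step missing, not a proof.
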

Note that `or' is not `xor' here; a reducible MDS code (\ref{eq:redu}) can also be semilinear.
This happens when both graphs\footnotemark[1] of $f$ and $f'$ are semilinear and, 
moreover, the representations (\ref{eq:can2MDS}) of the corresponding linear \MMDS codes
have the same summand $\chi_{M^*}(x_0)$ corresponding to the dependent variable $x_0$
($M^*$ can be $\{(0,1),(1,0)\}$, $\{(0,1),(1,1)\}$, or $\{(1,0),(1,1)\}$, 
but it is the same for both $f$ and $f'$).

\begin{corollary}\label{cor:N}
The number of MDS codes in $D(m,n)$ has the form 
$$2\cdot 3^{m+n}\cdot 2^{2^{2m+n-1}}(1+o(1)) \qquad\mbox{as\quad$m+n \to \infty$.}$$ 
\end{corollary}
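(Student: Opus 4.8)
The plan is to derive the count directly from Theorem~\ref{th:main} and Lemma~\ref{l:No-semi}. Write $A(m,n)$, $S(m,n)$, and $R(m,n)$ for the numbers of all, semilinear, and reducible MDS codes in $D(m,n)$, and set $N:=2m+n$ (note that the regimes $m+n\to\infty$ and $N\to\infty$ coincide, since $m+n\le N\le 2(m+n)$). Since a semilinear code is in particular an MDS code, while Theorem~\ref{th:main} says every MDS code is semilinear or reducible, we have
\[
S(m,n)\le A(m,n)\le S(m,n)+R(m,n).
\]
By Lemma~\ref{l:No-semi}, $S(m,n)=2\cdot 3^{m+n}\cdot 2^{2^{N-1}}(1+o(1))$, which is exactly the asserted expression. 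Therefore it suffices to show that the reducible codes are negligible, namely $R(m,n)=o\bigl(3^{m+n}2^{2^{N-1}}\bigr)$; the corollary then follows by sandwiching. I would prove this bound on $R(m,n)$ by induction on $N$.

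For the inductive step I would estimate $R(m,n)$ from above. By~(\ref{eq:redu}), every reducible code is specified (with possible over-counting, which only helps an upper bound) by a partition of the $m+n$ coordinates into two blocks of parameters $(m_1,n_1)$ and $(m_2,n_2)$, with $a_i:=2m_i+n_i\ge2$ and $a_1+a_2=N$, together with a latin coloring on each block. There are at most $2^{m+n}\le 2^{N}$ such partitions. The number of latin colorings of $D(m_i,n_i)$ equals the number of MDS codes in $D(m_i,n_i+1)$, that is, $A(m_i,n_i+1)$, whose parameter is $a_i+1\le N-1<N$; hence the induction hypothesis applies and gives $A(m_i,n_i+1)=O\bigl(3^{m_i+n_i}2^{2^{a_i}}\bigr)$. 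Multiplying the two block factors and summing over the partitions yields
\[
R(m,n)=O\Bigl(6^{m+n}\cdot 2^{\,2^{a_1}+2^{a_2}}\Bigr),
\]
the exponent being maximized over $a_1+a_2=N$, $a_i\ge2$.

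The decisive estimate is that $2^{a_1}+2^{a_2}$ is largest at the most unbalanced split $\{a_1,a_2\}=\{2,N-2\}$, so $2^{a_1}+2^{a_2}\le 2^{N-2}+4$, which lies doubly-exponentially below the exponent $2^{N-1}$ of the main term, since $2^{N-1}-(2^{N-2}+4)=2^{N-2}-4\to\infty$. Consequently
\[
\frac{R(m,n)}{3^{m+n}2^{2^{N-1}}}=O\bigl(2^{m+n}\,2^{\,2^{N-2}-2^{N-1}}\bigr)=O\bigl(2^{\,N-2^{N-2}}\bigr)\longrightarrow 0,
\]
so $R(m,n)=o\bigl(3^{m+n}2^{2^{N-1}}\bigr)$ and the induction closes. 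The base cases $N\le3$ need no hypothesis: a reducible code forces both blocks to have parameter $\ge2$, hence $N\ge4$, so for $N\le3$ every MDS code is semilinear and $R(m,n)=0$. I expect the only real obstacle to be the apparent circularity in bounding the number of latin colorings on a block; this is dissolved by the observation that each block carries a strictly smaller parameter $a_i+1\le N-1$, so the induction hypothesis delivers the required doubly-exponential bound, while the numeric gap above guarantees that any nontrivial splitting of a code into two factors costs a doubly-exponential factor and so cannot rival the leading term.
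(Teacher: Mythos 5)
Your proposal is correct, but it takes a genuinely different route from the paper. The paper's proof is a one-liner: it invokes Lemma~\ref{l:No-semi} for the count of semilinear codes and then simply cites \cite{PotKro:asymp} for the fact that the reducible codes are asymptotically negligible. You instead prove that negligibility from scratch, by induction on $N=2m+n$: you bound the number of reducible codes by (number of $2$-colorings of the coordinates) $\times$ (number of latin colorings of each block), convert latin colorings of $D(m_i,n_i)$ into MDS codes of $D(m_i,n_i+1)$ so the induction hypothesis applies to a strictly smaller parameter, and then exploit the decisive numeric gap $2^{a_1}+2^{a_2}\le 2^{N-2}+4\ll 2^{N-1}$ for $a_1+a_2=N$, $a_i\ge 2$. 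All the ingredients (the sandwich $S\le A\le S+R$, the constraint $a_i\ge 2$ forcing $N\ge 4$ for reducibility, the base cases) are in place, and the doubly-exponential margin easily absorbs the $6^{m+n}$ overcounting factor. What your approach buys is self-containedness; what it costs is a small amount of bookkeeping that you gloss over: the induction hypothesis should be phrased as a uniform non-asymptotic bound $A(m,n)\le C\cdot 3^{m+n}2^{2^{2m+n-1}}$ (obtainable for the semilinear part directly from Lemmas~\ref{l:No-lin} and~\ref{l:2mds-mds}, without the $1+o(1)$), rather than as the asymptotic statement of the corollary itself, so that the implied constants do not degrade across induction steps. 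Since the error term decays like $2^{-2^{N-2}}$ while the constant only enters quadratically, this is routine to arrange, so I would call it a presentational refinement rather than a gap.
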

\begin{proof}
By Lemma~\ref{l:No-semi}, the number of semilinear MDS codes is $2\cdot 3^{m+n}\cdot 2^{2^{2m+n-1}}(1+o(1))$.
 The number of reducible MDS codes is asymptotically inessential comparing with this value 
 (the arguments for the general case do not differ from those for $D(0,n)$, considered in \cite{PotKro:asymp}).
\end{proof}
Corollary~\ref{cor:N} improves the result of \cite{Kro:2015:N-MDS-Doob}, where simpler arguments are used to establish the asymptotic of the logarithm of the number of MDS codes in $D(m,n)$.

\section{A key proposition}\label{s:key}
In this section, we will prove the following proposition, which will be used in the proof of the main theorem.

\begin{proposition}\label{p:key}
  A \MMDS code in $D(m,n)$, $(m,n)\ne (1,0)$, is decomposable if and only if it is not connected.
\end{proposition}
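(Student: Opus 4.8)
The plan is to prove the two implications separately, treating the forward one as a short consequence of the available lemmas and concentrating the effort on the reverse direction, where the exclusion of $(1,0)$ will become visible.

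For the direction \emph{decomposable $\Rightarrow$ not connected}, I would invoke Lemma~\ref{l:No-comp} directly. If the characteristic function of $M$ splits nontrivially as $\chi_{M_1}+\chi_{M_2}+\sigma \bmod 2$ in disjoint nonempty groups of variables, and $M_i$ together with its complement has $N_i\ge 1$ components, then $M$ has $2N_1N_2\ge 2$ components and hence is disconnected. This uses no hypothesis on $(m,n)$; for $D(1,0)$ it is vacuous, since a \MMDS code supported on a single Shrikhande factor cannot be decomposed at all, there being only one variable.

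For the converse \emph{not connected $\Rightarrow$ decomposable}, I would argue by induction on $2m+n$. First I would record the restriction principle: fixing any single coordinate of $D(m,n)$ yields a subgraph isomorphic to a smaller Doob or Hamming graph, and the intersection of $M$ with it is again a \MMDS code (every clique of order $4$ and every Shrikhande subgraph of the smaller graph is one of the larger graph, so both defining conditions survive). The engine of the induction is a \emph{component-coloring} attached to a $K$-coordinate, when one is present. Taking such a coordinate as the first one and writing $D(m,n)=K\times D'$, each fibre $\{(x,w):x\in\VV K\}$ meets $M$ in exactly two vertices; these are adjacent in the $K_4$ clique and therefore lie in one component of $M$. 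This defines a map $\gamma$ from $\VV D'$ onto the set of components of $M$. I would then analyze $\gamma$: in the favorable case the four slices $M_{00},M_{01},M_{10},M_{11}$ take only two values $S,\overline S$ (two each), which is exactly the algebraic condition $\chi_M(x,w)\equiv\chi_{M_1}(x)+\chi_S(w)\bmod 2$ splitting off the first coordinate; otherwise $\gamma$ witnesses that the disconnection is already visible on $D'$, and I would restrict and apply the induction hypothesis to descend to a smaller graph and lift the resulting decomposition back.

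The hard part will be the Shrikhande coordinates, and this is also where $(1,0)$ enters. Locally a \MMDS code in $\Sh$ can be of type Fig.~\ref{f:2mds}(a), i.e.\ two disjoint MDS codes with no internal edge, so a single Shrikhande slice need not lie in one component; consequently the clean coloring $\gamma$ is unavailable along a $\Sh$-coordinate, and in $D(1,0)$ precisely this decoupling produces a disconnected yet indecomposable code, which is why that case must be barred. The remaining genuine obstacle is therefore the pure-Doob case $D(m,0)$ with $m\ge 2$, where no $K$-coordinate is present: here I would instead exploit pairs of Shrikhande coordinates and the two-disjoint-MDS structure to manufacture a splitting, and I would have to verify that the restrictions called on in the induction never land on the excluded graph $D(1,0)$ in a way that invalidates the descent.
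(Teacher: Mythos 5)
Your forward implication is exactly the paper's (it is the contrapositive of Lemma~\ref{l:No-comp}), and your diagnosis of why $D(1,0)$ must be excluded is correct. The converse, however, is a plan rather than a proof, and the missing pieces sit precisely where the real difficulty lies. First, even in the $K$-coordinate step your dichotomy is not established: when the four slices $M_y$, $y\in\VV K$, do not fall into two complementary pairs $S,\overline S$, you assert that ``$\gamma$ witnesses that the disconnection is already visible on $D'$'' and that you can restrict and induct, but you give no argument that a disconnection of $M$ forces a disconnection (or decomposition) of some coordinate restriction, and the lifting step --- from a decomposition of a restriction back to a decomposition of $M$ --- is itself nontrivial and unaddressed. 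Second, and more seriously, the case you yourself name ``the remaining genuine obstacle,'' namely $D(m,0)$ with $m\ge 2$ and, more generally, the handling of Shrikhande coordinates, is left entirely unexecuted (``I would instead exploit pairs of Shrikhande coordinates \dots to manufacture a splitting''). That case is the whole content of the proposition.

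For comparison, the paper resolves exactly these two difficulties with machinery absent from your sketch. For the Shrikhande coordinates it proves Lemma~\ref{l:Sh-to-K}: if some $\Sh$-slice of $M$ meets two components of $M$, then that coordinate splits off additively; the proof fixes the slice pattern $M_{a,b}$ and propagates it over the whole graph by alternating between $M_{a,b}$ and $\overline M_{a,b}$ along successive neighbourhoods of a component, using Lemma~\ref{l:nei}, with the $m+n=2$ case (Lemma~\ref{l:2mds2}, itself resting on Lemma~\ref{l:2sh}) as the combinatorial base. For the complementary case, in which no Shrikhande subgraph straddles two components, the paper adds edges to turn each $\Sh$-factor into $K_{16}$; the code stays disconnected in $K_{16}^m\times K_4^n$, and the known decomposability theorem for products of complete graphs (Lemmas~\ref{l:Kq} and~\ref{l:K16K4}) finishes the argument. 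If you want to push your induction through, you will need substitutes for both ingredients; the alternating-neighbourhood argument behind Lemma~\ref{l:nei} and Lemma~\ref{l:Sh-to-K} is the part you are least likely to be able to avoid.
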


The following fact is easy to check directly.
\begin{lemma}\label{l:2sh}
  Let $M$ and $M'$ be two \MMDS codes in $\Sh$. 
  Then,  either $M$ and $M'$ coincide, 
  or they are are disjoint (i.e., $M\cup M' = \VV \Sh$), 
  or every component of $M$ intersects with every component of $M'$.
\end{lemma}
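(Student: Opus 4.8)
The plan is to reduce everything to the cycle structure of the subgraph $\Sh[M]$ induced by a \MMDS code $M$, and then to dispatch the trichotomy by a short case analysis on the number of components. First I would record, by the direct inspection already summarized in Fig.~\ref{f:2mds}, that $\Sh[M]$ is always $2$-regular and that each of its components is a cycle of length $4$ or $8$: either $M$ is connected and $\Sh[M]$ is a single $8$-cycle (types (b),(c)), or $M$ has two components, each a $4$-cycle, which happens exactly when $M$ is linear, i.e.\ a coset of an index-$2$ subgroup $H$ of $Z_4^2$ (type (a); there are exactly $2\cdot3=6$ such codes by Lemma~\ref{l:No-lin}). Since complementation sends a \MMDS code to a \MMDS code and sends a coset of $H$ to the other coset of $H$, it maps the six two-component codes onto themselves; being a bijection on all \MMDS codes, it therefore also maps the connected codes onto themselves, so the complement $\overline M$ of a connected code is connected.

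Now let $M,M'$ be two \MMDS codes in $\Sh$. If $M=M'$ we are in the first alternative, and if $M\cap M'=\emptyset$ then, since $|M|=|M'|=|\VV\Sh|/2$, we get $M'=\overline M$ and we are in the second (``disjoint'') alternative. So I assume $M\ne M'$ and $M\cap M'\ne\emptyset$, and I must show that every component of $M$ meets every component of $M'$.

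Suppose first that at least one of the two codes is connected, and relabel so that $M$ is connected; then $\Sh[M]$ is a single $8$-cycle and, by the first paragraph, $\overline M$ is connected too. If $M'$ is also connected, then each code is its own unique component and the desired conclusion reduces to $M\cap M'\ne\emptyset$, which holds by assumption. If instead $M'$ is linear, let $Q$ be one of its two components, a $4$-cycle; were $Q\cap M=\emptyset$, then $Q\subseteq\overline M$ and the four edges of the cycle $Q$ would be edges of $\Sh[\overline M]$, contradicting the fact that $\Sh[\overline M]$ is a single $8$-cycle (which contains no $4$-cycle). Hence both components of $M'$ meet $M$, and since $M$ is the unique component of itself, every component of $M$ meets every component of $M'$.

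It remains to treat the case where $M=H+a$ and $M'=H'+b$ are both linear, cosets of index-$2$ subgroups $H,H'$. Let $N_H$ denote the unique order-$4$ subgroup of $H$ whose cosets induce $4$-cycles (the cyclic, non-independent one); then $M=(N_H+a)\sqcup(N_H+h_0+a)$ for some $h_0\in H\setminus N_H$, so the two components of $M$ are two cosets of $N_H$, and similarly for $M'$ with $N_{H'}$. If $H=H'$ then $M$ and $M'$ are cosets of the same index-$2$ subgroup, so $M=M'$ or $M'=\overline M$, already handled. If $H\ne H'$, a direct check of the three possibilities $N_\bullet\in\{\langle01\rangle,\langle10\rangle,\langle11\rangle\}$ gives $N_H\cap N_{H'}=\{00\}$, whence $N_H+N_{H'}=Z_4^2$; consequently every coset of $N_H$ meets every coset of $N_{H'}$, i.e.\ every component of $M$ meets every component of $M'$. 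I expect the only real work to be the finite verification underlying the first paragraph---that $\Sh[M]$ is $2$-regular with components precisely these $4$- and $8$-cycles, and that the two-component codes are exactly the index-$2$ cosets with the stated subgroups $N_H$---after which the girth remark (no $4$-cycle inside an $8$-cycle) and the triviality $N_H\cap N_{H'}=\{00\}$ close the three cases.
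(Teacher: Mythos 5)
Your proof is correct, and it is worth noting that the paper itself supplies no argument here at all: Lemma~\ref{l:2sh} is introduced with ``easy to check directly,'' i.e.\ the intended justification is an unstructured finite verification over pairs of the (up to equivalence, three) \MMDS codes of Fig.~\ref{f:2mds}. Your route organizes that check algebraically, and every step holds up: $2$-regularity plus bipartiteness (each \MMDS code is a union of two disjoint independent sets) force the components to be $4$- or $8$-cycles; the counting/bijection argument correctly shows complementation preserves connectedness; the mixed case closes by the girth observation that an $8$-cycle contains no $4$-cycle subgraph; and the linear--linear case reduces to $N_H\cap N_{H'}=\{00\}$ for distinct $N_H,N_{H'}\in\{\langle01\rangle,\langle10\rangle,\langle11\rangle\}$, whence $N_H+N_{H'}=Z_4^2$ and any two cosets meet. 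What this buys over the paper's implicit brute force is that the only residual finite work is the one item you explicitly flag: identifying the disconnected \MMDS codes in $\Sh$ with the six cosets of the three index-$2$ subgroups. That identification is genuinely needed and genuinely true --- it amounts to the fact that the $12$ induced $4$-cycles of $\Sh$ are exactly the cosets of $\langle01\rangle$, $\langle10\rangle$, $\langle11\rangle$ (a fact the paper itself uses in the proof of Proposition~\ref{p:eb}), after which two vertex-disjoint, mutually non-adjacent $4$-cycles are forced to be cosets of the same cyclic subgroup whose union is an index-$2$ coset. Your citation of Lemma~\ref{l:No-lin} for the count $2\cdot3=6$ is also legitimate, since for $m+n=1$ the paper's definition makes ``linear'' synonymous with ``disconnected,'' and the coset description of linear codes is asserted in the proof of Lemma~\ref{l:No-semi}. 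The case analysis is exhaustive (coincide; disjoint, which by $|M|=|M'|=8$ is the complement case; and the three subcases connected--connected, connected--linear, linear--linear), so nothing is missing.
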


The following partial case of Proposition~\ref{p:key} will be used as an auxiliary statement.
\begin{lemma}\label{l:2mds2}
  A \MMDS code $M$ in $D(m,n)$, $m+n=2$, is decomposable if and only if it is not connected.
\end{lemma}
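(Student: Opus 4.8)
The plan is to treat the two implications separately. The forward implication, that a decomposable \MMDS code is not connected, holds in full generality and needs no case analysis: if $\chi_M\equiv\chi_{M_1}(\tilde x_1,\tilde y_1)+\chi_{M_2}(\tilde x_2,\tilde y_2)+\sigma\bmod 2$ with each $M_i$ a nonempty \MMDS code having $N_i\ge 1$ components, then by Lemma~\ref{l:No-comp} the code $M$ has $2N_1N_2\ge 2$ components and is therefore not connected. All the real work is in the converse, for which I would split into the three cases $(m,n)\in\{(0,2),(1,1),(2,0)\}$ and argue by the fiber structure of $M$.

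For $(0,2)$ the graph $D(0,2)=K\square K$ is the $4\times4$ rook's graph, and the only constraint on $M$ is that every row and every column meets $M$ in exactly two cells. Hence the induced graph is $2$-regular, i.e.\ a disjoint union of cycles, and each cycle alternates row- and column-steps, so has even length; a $4$-cycle is exactly a $2\times2$ subgrid. If $M$ is disconnected it must consist of two $4$-cycles, and balancing the row/column counts forces these to occupy complementary pairs of rows and of columns. Writing $M_1$ for the two occupied rows of the first block and $M_2$ for its two columns, one checks $\chi_M(i,j)\equiv\chi_{M_1}(i)+\chi_{M_2}(j)\bmod 2$, so $M$ is decomposable.

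For $(1,1)$ and $(2,0)$ I would use the Shrikhande fibers. Fixing every coordinate except the first $\Sh$-coordinate gives, for each value $b$ of the remaining coordinate, a \MMDS code $M^b$ in $\Sh$, while the transversal fibers $M_a$ are \MMDS codes in the second factor. The decomposability criterion I would establish is: a valid \MMDS code is decomposable \emph{iff} the family $\{M^b\}$ takes exactly two distinct values, which are then necessarily complementary (a single value is impossible, as some transversal fiber would be empty or all of the factor). Connectivity of $D_M$ I would reduce, via Lemma~\ref{l:2sh}, to a ``fiber--component'' graph $H$ whose vertices are the components of the various $M^b$: along an edge $b\sim b'$ of the second factor, two coinciding fibers connect matching components, two complementary fibers contribute no edge, and two distinct non-complementary (``meeting'') fibers connect \emph{every} component of one to every component of the other. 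Thus $D_M$ is connected iff $H$ is.

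In the case $(1,1)$ the second factor is the complete graph $K$, so the four fibers $M^0,\dots,M^3$ are pairwise adjacent, and the clique condition gives that each $a$ lies in exactly two fibers, whence $\sum_{b'\ne b}|M^b\cap M^{b'}|=8$ for every $b$. A short argument then shows that if the $M^b$ take $\ge 3$ distinct values there is no coinciding pair, and no fiber can be complementary to two others, so each fiber meets at least two of the remaining three; since a four-vertex graph of minimum degree $2$ is connected, the meeting relation is connected and hence $D_M$ is connected. Contrapositively a disconnected $M$ has at most two fiber-values, which are complementary, so $M$ is decomposable. The case $(2,0)$ follows the same template and is the main obstacle: the second factor is now $\Sh$ rather than $K$, so there are sixteen fibers and the meeting/coinciding/complementary relation acts only along edges of $\Sh$. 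Establishing that $\ge 3$ distinct fiber-values force $H$ connected requires combining the transversal constraint ``$M_a$ is a \MMDS code for every $a$'' with Lemma~\ref{l:2sh} to control how the level sets $\{b:M^b=P\}$ sit inside $\Sh$, ruling out any separation of the fibers into two blocks joined only by complementary edges. I expect this bookkeeping, rather than any single conceptual step, to be where the argument is genuinely laborious.
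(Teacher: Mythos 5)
Your overall architecture matches the paper's: the forward direction is exactly the paper's appeal to Lemma~\ref{l:No-comp}, and the converse is split into the three cases $(0,2)$, $(1,1)$, $(2,0)$ and argued through the Shrikhande/$K$ fibers via Lemma~\ref{l:2sh}. The $(0,2)$ and $(1,1)$ cases are fine (one small inaccuracy in $(1,1)$: with four fibers, ``$\ge 3$ distinct values'' would by pigeonhole \emph{force} a coinciding pair rather than exclude one; what saves you is that the clique condition actually rules out exactly three distinct values, since two coinciding fibers $P$ already use up both memberships of every $x\in P$ and hence force the other two fibers to equal $\overline P$ --- so the case split is really ``two complementary values'' versus ``four distinct values,'' and your connectivity conclusion survives).

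The genuine gap is the case $D(2,0)$, which is the heart of the lemma and which you explicitly leave as an unexecuted plan (``I expect this bookkeeping\dots to be where the argument is genuinely laborious''). Your reduction to connectivity of the fiber--component graph $H$ is sound, and your decomposability criterion (exactly two fiber-values, necessarily complementary) is correct, but nothing in the proposal actually establishes that a non-decomposable code yields a connected $H$ when the fibers vary along $\Sh$ rather than along a clique; the minimum-degree trick from the $(1,1)$ case does not transfer, because two fibers interact only when their indices are adjacent in $\Sh$. For comparison, the paper finishes this case concretely: it fixes a fiber $L_{00}$, treats the subcase where $L_{00}$ is of the disconnected type (Fig.~\ref{f:2mds}(a)), say $L_{00}=\{00,01,02,03,20,21,22,23\}$, and looks at the transversal fibers $R_{00},R_{01},R_{02},R_{03}$ over one component of $L_{00}$. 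If these four coincide, then $\{00,01,02,03\}$ is a component of every $L_a$ or $\overline L_a$, whence by Lemma~\ref{l:2sh} every $L_a\in\{L_{00},\overline L_{00}\}$ and $M$ is decomposable. Otherwise two adjacent ones, say $R_{00}\ne R_{01}$, intersect (they share the point $00$ because $00,01\in L_{00}$), so Lemma~\ref{l:2sh} glues $00R_{00}$ and $01R_{01}$ into one component; chaining this along the rows produces four sets $S_0,\dots,S_3$, each inside a single component, and the same intersection argument supplies edges joining consecutive $S_i$, so $M$ is connected. You would need to supply an argument of this kind (or complete your $H$-connectivity bookkeeping) before the lemma can be considered proved.
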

\begin{proof}
By Lemma~\ref{l:No-comp} every decomposable \MMDS code is not connected, which is the 'only if' part of the statement.
For the `if' part, we have to show that any \MMDS code $M$ in $D(m,n)$, $m+n=2$, is decomposable or connected.

The case of $D(0,2)=K\times K$ is trivial, as there are only two nonequivalent \MMDS codes, one is decomposable, the other is connected.

The case of $D(1,1)=\Sh\times K$ is also simple. 
Denote $M_y = \{ x \in \VV \Sh \mid  (x,y) \in M \}$ for each $y\in \VV K$.
Then $M_y$ is a \MMDS code in $\Sh$. 
If for all $y'$, $y''$ from $\VV K$ 
the sets $M_{y'}$ and $M_{y''}$ 
are either coinciding or disjoint,
then, readily, $M$ is decomposable.
Assume that for some  $y'$, $y''\in\VV K$ the sets $M_{y'}$ and $M_{y''}$ are neither coinciding nor disjoint. 
By Lemma~\ref{l:2sh}, the corresponding $16$ vertices of $M$ belong to the same component.
Moreover, for every $y$ from $\VV K$, the set $M_y$ is neither coinciding nor disjoint with at least one of $M_{y'}$, $M_{y''}$.
It follows that $M$ is connected.

It remains to consider the case of $D(2,0)=\Sh\times\Sh$.
For $a\in \Sh$, denote 
\begin{eqnarray*}
R_a=\{x_1\in \Sh \mid (a,x_1) \in M\} &\mbox{ and }& aR_a=\{(a,x_1) \in M\}, \\
L_a=\{x_1\in \Sh \mid (x_1,a) \in M\} &\mbox{ and }& L_a a=\{(x_1,a) \in M\}.
\end{eqnarray*}
Consider the case when $L_{00}$ is equivalent to the \MMDS code in Fig.~\ref{f:2mds}(a)
(the other two cases are considered similarly);
without loss of generality, assume $L_{00}=\{00,01,02,03,20,21,22,23\}$.
Now consider the component $\{00,\linebreak[2]01,\linebreak[2]02,\linebreak[2]03\}$ of $L_{00}$.

(i)
If $R_{00}=R_{01}=R_{02}=R_{03}$, 
then for every $a$ from $\Sh$, the set $\{00,\linebreak[2]01,\linebreak[2]02,\linebreak[2]03\}$ is a component of $L_a$ or $\overline L_a$; 
hence, by Lemma~\ref{l:2sh}, $L_a=L_{00}$ or $\overline L_a=L_{00}$. 
In this case, readily, $M$ is decomposable.

(ii)
So, without loss of generality we can assume that 
$R_{00}$ and $R_{01}$ are different.
Since they intersect in $(00)$
(indeed, $(00,00),(01,00)\in L_{00}\subset M$), 
Lemma~\ref{l:2sh} means that 
$00R_{00}$ and $01R_{01}$ 
lie in a one component of $M$.
Next, $R_{02}$ intersects with $R_{00}$;
hence, $02R_{02}$ lies in the same component as $00R_{00}$.
Similarly, $03R_{03}$.

By similar arguments, each of $S_0=00R_{00}\cup 01R_{01}\cup 02R_{02}\cup 03R_{03}$, 
$S_1=10R_{10}\cup 11R_{11}\cup 12R_{12}\cup 13R_{13}$, $S_2=20R_{20}\cup 21R_{21}\cup 22R_{22}\cup 23R_{23}$, 
$S_3=30R_{30}\cup 31R_{31}\cup 32R_{32}\cup 33R_{33}$ is a subset of a component of $M$,
and it remains to show that this component is common for all the four sets.
For example, let us show that it is common for $S_0$ and
$S_1$. Indeed, since $R_{00}$ and $R_{01}$ are different,
$R_{11}$ intersects with at least one of them; this means that there is an edge connecting 
$00R_{00}\cup 01R_{01}$ and $11R_{11}$. Similarly, there are edges connecting $S_0$ and $S_3$.
The same argument is applicable to $S_2$ and $S_3$, 
except for the case $R_{20}=R_{21}=R_{22}=R_{23}$.
Excluding this case by the argument similar to (i),
we find that $M$ consists of one component.
\end{proof}

\begin{lemma}\label{l:nei}
  Let $M$ be a \MMDS code in $D(m,n)$, let $L$ be a multicomponent of $M$, and let $L_1$ be the set of vertices of $D(m,n)$ 
  at distance $1$ from $L$. Then $L_1$ is a multicomponent of $\overline M$.
\end{lemma}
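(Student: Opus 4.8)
The plan is to establish two facts: that $L_1\subseteq\overline M$, and that $L_1$ is closed under the edges of the subgraph induced by $\overline M$, i.e.\ if $v\in L_1$, $w\in\overline M$ and $v\sim w$, then $w\in L_1$. The first is immediate: since $L$ is a union of connected components of the subgraph induced by $M$, any vertex of $M$ adjacent to $L$ already lies in $L$ (the same component), so a vertex at distance exactly $1$ from $L$ cannot be in $M$. The closure property says that no edge of $\overline M$ joins $L_1$ to $\overline M\setminus L_1$; combined with $L_1\subseteq\overline M$, this forces each component of $\overline M$ to be either contained in $L_1$ or disjoint from it, whence $L_1$ is precisely the union of the components it meets, i.e.\ a multicomponent of $\overline M$. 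So everything reduces to the local closure property.

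To prove closure I would fix an edge $vw$ (in some coordinate $j$) with $v\in L_1$ and $w\in\overline M$, choose a neighbour $\ell\in L$ of $v$ (in some coordinate $i$), and examine the $2$-dimensional square spanned by coordinates $i,j$ through $v$, whose fourth corner $u$ is adjacent both to $\ell$ (across coordinate $j$) and to $w$ (across coordinate $i$). If $u\in M$, then $u\sim\ell\in L$ forces $u\in L$, and $w\sim u$ gives $w\in L_1$; so the only troublesome situation is $u\in\overline M$, and the case analysis below is organised by the types of the coordinates $i$ and $j$.

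The clique condition (every $4$-clique meets $M$ in exactly two vertices) disposes of every configuration involving a $K$-coordinate. If the edge $vw$ lies in a $K$-coordinate, then $v,w\in\overline M$ force the other two vertices of that clique into $M$, and a short comparison with the parallel clique through $\ell$ produces an $L$-vertex on the $vw$-clique adjacent to $w$, so $w\in L_1$. Likewise, if $v$ has any $L$-neighbour across a $K$-coordinate, the same parallel-clique device yields, for an arbitrary $\overline M$-neighbour $w$ of $v$, an $L$-vertex adjacent to $w$. Hence we may assume that the edge $vw$ and all $L$-neighbours of $v$ lie in $\Sh$-coordinates.

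The remaining $\Sh$-coordinate case is the main obstacle, and here I would invoke Lemma~\ref{l:2sh} together with the explicit structure of MDS and \MMDS codes in $\Sh$. Taking the $\Sh$-coordinate $i$ of an $L$-neighbour $\ell$ of $v$, I would transport the \MMDS code induced on the parallel $\Sh$-line through $w$ back to the $\Sh$-line through $v$ by the coordinate-$j$ translation, obtaining two \MMDS codes on one copy of $\Sh$; since $\ell$ lies in one but not the other and $v$ lies in neither, Lemma~\ref{l:2sh} shows these codes are neither equal nor complementary, so every component of one meets every component of the other. Using the count that a vertex outside a \MMDS code in $\Sh$ has exactly four neighbours inside it (so by inclusion--exclusion $v$ has at least two neighbours common to the two codes), I would seek a neighbour $x$ of $v$ lying in $L$ whose coordinate-$j$ shift falls in $M$, hence in $L$ and adjacent to $w$, giving $w\in L_1$. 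The subcase $i=j$, where the edge and the $L$-neighbour share their $\Sh$-coordinate, is exactly the assertion of the lemma for $D(1,0)=\Sh$ and can be settled directly from the finite lists in Fig.~\ref{f:mds} and Fig.~\ref{f:2mds}. The delicate point I expect to fight with is arranging the neighbour $x$ supplied by Lemma~\ref{l:2sh} to lie simultaneously in $L$ and in the transported code; this is where the combinatorics of the Shrikhande graph, rather than the clique conditions alone, becomes essential.
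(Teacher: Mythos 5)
Your reduction to the closure property and your treatment of every configuration involving a $K$-coordinate are correct (the parallel-clique counting works, and is if anything more elementary than what the paper does there). The genuine gap is exactly at the point you flag yourself: two distinct $\Sh$-coordinates with the fourth corner $u\notin M$. Write $M_A$ for the \MMDS code cut out on the $\Sh$-line through $v$ and $\ell$, and $M_B$ for the code on the parallel line through $w$, transported back by the coordinate-$j$ shift; what you need is a neighbour $x$ of $v$ lying both in $M_B$ and in the component $C_\ell$ of $M_A$ containing $\ell$, because only that component is known to lie in $L$. Your inclusion--exclusion count does produce at least two neighbours of $v$ in $M_A\cap M_B$, but nothing places them in $C_\ell$. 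When $M_A$ is connected (types (b), (c) of Fig.~\ref{f:2mds}) this is harmless, since then $C_\ell=M_A$; but when $M_A$ is of type (a), say $M_A=\{(x,y): y\in\{0,2\}\}$ with $\ell=00$ and $v=01$, the two guaranteed common neighbours could a priori be $02$ and $12$, which lie in the wrong component $\{02,12,22,32\}$, and what you actually need is that every \MMDS code in $\Sh$ avoiding $00$ and $01$ must contain $30$ (the only other neighbour of $v$ in $C_\ell$). That statement happens to be true, but only by a finite verification over the \MMDS codes of $\Sh$ which your proposal does not perform; as written, the central case of the proof is unproven.

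The idea you are missing is the one the paper uses: Lemma~\ref{l:2mds2}, proved before this lemma, states that the restriction $M'$ of $M$ to the two coordinates $i,j$ (a \MMDS code in $D(2,0)$, $D(1,1)$, or $D(0,2)$) is either connected or decomposable. If $M'$ is decomposable, then $\chi_{M'}(\ell)+\chi_{M'}(v)+\chi_{M'}(w)+\chi_{M'}(u)\equiv 0 \pmod 2$, which forces $u\in M$ --- so your ``troublesome case'' simply cannot occur. If $M'$ is connected, then all of $M'$ lies in the single component of $M$ containing $\ell$, hence in $L$; in particular both components of $M_A$ are in $L$, the component-location problem disappears, and any of the four $M$-neighbours of $w$ on its own $\Sh$-line finishes the argument. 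So either invoke Lemma~\ref{l:2mds2} at your case split on $u\in M$ versus $u\notin M$, or carry out the finite check over the disconnected \MMDS codes of $\Sh$ described above; one of the two is indispensable, and your counting argument alone cannot replace it.
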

\begin{proof}
Clearly, $L_1$ is a subset of $\overline M$. 
It remains to show that
for each $b\in L_1$ every $c\in\overline M$ adjacent to $b$ also belongs to $L_1$.
By the definition of $L_1$, there is $a\in L$ adjacent to $b$.
Assume that $a$ and $b$ differ in the $i$th coordinate, 
while $b$ and $c$ differ in the $j$th coordinate.

(*) We will show that {\it there is $d\in L$ adjacent to $c$ and differing from $c$ in the $i$th coordinate}.
If $i=j$ is a $K$-coordinate, 
 then this claim is obvious (we can take $d=a$).
If $i=j$ is a $\Sh$-coordinate, 
 then it is also easy to see.
Assume $i \ne j$ and consider the subgraph $D$ 
(isomorphic to  $D(0,2)$, $D(1,1)$, or $D(2,0)$) corresponding 
to these two coordinates and containing the vertices $a$, $b$, and $c$.
Let $M'$ be the intersection of $M$ with this subgraph.
We know that $M'$ is a \MMDS code in $D$ and hence, 
by Lemma~\ref{l:2mds2}, it is connected or decomposable.
In the first case, $M' \subseteq L$ and (*) is trivial.
In the last case, the vertex $d$ coinciding with $c$ in all coordinates 
except the $i$th one and coinciding with $a$ in the $i$th coordinate
must belong to $L$ (indeed, from the decomposability we have 
\begin{eqnarray*}
\chi_{M'}(a) + \chi_{M'}(b) + \chi_{M'}(c) + \chi_{M'}(d) \qquad\qquad\qquad\qquad\qquad&&\\
 {}
 \stackrel{(\ref{eq:M1M0})}{=}(\chi_{M'_1}(a_i)+\chi_{M'_2}(a_j) +\sigma) 
+(\chi_{M'_1}(c_i)+\chi_{M'_2}(a_j) +\sigma) &&\\
{}+(\chi_{M'_1}(c_i)+\chi_{M'_2}(c_j) +\sigma)
+(\chi_{M'_1}(a_i)+\chi_{M'_2}(c_j) +\sigma)&= &0\bmod 2,
\end{eqnarray*}
hence $d\in M'$; since $d$ and $a$ are adjacent, we also have $d\in L$).

So, (*) holds, and $L_1$ consists of components of $\overline M$.
\end{proof}

\begin{corollary}\label{c:nei}
In $D(m,n)$, an \MMDS code is uniquely determined 
by any of its multicomponents. 
In other words,
two \MMDS codes with a common multicomponent coincide.
\end{corollary}
\begin{proof}
 Let $L_0$ be a multicomponent of an \MMDS code $M$. 
 Define recursively $L_{i+1}$ 
 as the set of vertices of $D(m,n)$
 at distance $1$ from $L_i$. 
 Since the diameter of the graph is $2m+n$ 
 and noting that 
 $L_{i}\subseteq L_{i+2}$,
 we find 
 $$ \VV D(m,n) = \bigcup_{i=0}^{2m+n} L_{i} = L_{2m+n-1}\cup L_{2m+n}. $$
 By Lemma~\ref{l:nei}, $L_1,L_3,L_5,\ldots \subseteq \overline M$ and
 $L_0,L_2,L_4,\ldots \subseteq  M$.
 We conclude that $M = L_{2m+n-1}$ or $M = L_{2m+n}$, depending on the parity of $2m+n$.
\end{proof}

\begin{lemma}\label{l:Sh-to-K}
  Let $M$ be a \MMDS code in $D(m,n)$ where $m>0$ and $m+n>1$. 
  Let, for some $a_2,...,a_m\in \VV \Sh$ and $b_1,...,b_n\in \VV K$,  
  $$M' = \{(x_1,a_2,...,a_m,b_1,...,b_n)\in M \mid x_1 \in \VV\Sh \}.$$ 
  Then either all $8$ elements of $M'$ belong to the same component of $M$ or 
  $M$ is decomposable and 
  \begin{eqnarray}\label{eq:M1M0}
     \chi_{M}(x_1,x_2,...,x_m,y_1,...,y_n)\qquad\qquad\qquad\qquad\qquad\qquad\qquad\qquad\qquad  \nonumber\\
 {}  = \chi_{M'}(x_1,a_2,...,a_m,b_1,...,b_n)
   + \chi_{M_0}(x_2,...,x_m,y_1,...,y_n) +1 \bmod 2
  \end{eqnarray}
  for some \MMDS code $M_0$.
\end{lemma}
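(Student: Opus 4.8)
The plan is to analyze $M$ through its one-dimensional fibers in the first $\Sh$-coordinate. For each $\bar z=(x_2,\dots,x_m,y_1,\dots,y_n)\in \VV D(m-1,n)$ put $M_{\bar z}=\{x_1\in \Sh : (x_1,\bar z)\in M\}$; this is a \MMDS code in $\Sh$ (one of the three types of Fig.~\ref{f:2mds}), and $M'=M_{\bar a}$ for $\bar a=(a_2,\dots,a_m,b_1,\dots,b_n)$. If $M'$ is connected (types (b),(c)) then its eight vertices already lie in one component and we are done, so I assume $M'$ is of type (a), with two components $C^0,C^1$. The first observation is that the decomposition (\ref{eq:M1M0}) is equivalent to the condition that \emph{every} fiber satisfies $M_{\bar z}\in\{M',\overline{M'}\}$: indeed, setting $M_0=\{\bar z : M_{\bar z}=\overline{M'}\}$, the identity $\chi_M(x_1,\bar z)=\chi_{M'}(x_1)+\chi_{M_0}(\bar z)$ holds exactly when each fiber equals $M'$ (on $\overline{M_0}$) or $\overline{M'}$ (on $M_0$). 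Thus the lemma reduces to the dichotomy: \emph{either all fibers lie in $\{M',\overline{M'}\}$, or $C^0$ and $C^1$ lie in one component of $M$.}

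First I would dispatch the forward half, that all-fibers-in-$\{M',\overline{M'}\}$ forces a genuine decomposition; this amounts to checking that the flip set $M_0$ is itself a \MMDS code in $D(m-1,n)$, which I can read off from the defining conditions of $M$ on the two-coordinate subgraphs. For a $K$-clique of $D(m-1,n)$ (a single $K$-coordinate varying), the corresponding $D(1,1)$-subgraph together with the clique condition on $M$ gives, for each $x_1\in M'$, exactly two fibers equal to $M'$ and hence exactly two equal to $\overline{M'}$, i.e. exactly two clique-vertices in $M_0$. For an $\Sh$-subgraph of $D(m-1,n)$ the corresponding $D(2,0)$-subgraph shows that $\{a : M_{\bar z^{(a)}}=M'\}$ is a \MMDS code in $\Sh$, whence its complement $\{a : M_{\bar z^{(a)}}=\overline{M'}\}$ is too, using that the complement of a \MMDS code is a \MMDS code. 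So $M_0$ is a \MMDS code and (\ref{eq:M1M0}) holds. It remains to prove the converse: if some fiber is \emph{not} in $\{M',\overline{M'}\}$, then $C^0$ and $C^1$ are joined in $M$.

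For this I would induct on $m+n$. The base case $m+n=2$ is immediate from Lemma~\ref{l:2mds2}: a connected $M$ has all its vertices in one component, while a decomposable $M$ in $D(1,1)$ or $D(2,0)$ splits the first coordinate from the second, which (up to complementing both factors) is exactly (\ref{eq:M1M0}). For the step, locate the first coordinate inside its indecomposable summand $M_1$ in the canonical decomposition (\ref{eq:can2MDS}). If $M_1$ is a proper summand, then by Lemma~\ref{l:No-comp} the induced subgraph of $M$ splits into two edge-disjoint blocks, each a Cartesian product of the induced subgraphs of $M_1$ (or $\overline{M_1}$) with those of the complementary factor; applying the induction hypothesis to the smaller code $M_1$ (which, being indecomposable, cannot split off its first coordinate) forces the first-coordinate fiber of $M_1$ into one component, and the product structure of the block transfers this to $C^0,C^1$ in $M$. (If the relevant block is the one built from $\overline{M_1}$, I apply the induction hypothesis to $\overline{M_1}$ instead and note that a decomposition of $\overline{M_1}$ along the first coordinate would decompose $M_1$ as well, contradicting indecomposability.)

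The remaining, and main, case is that $M$ itself is indecomposable. Here no fiber-decomposition exists, so by the reduction above some fiber leaves $\{M',\overline{M'}\}$. Travelling in $D(m-1,n)$ from $\bar a$ through fibers that stay in $\{M',\overline{M'}\}$, I reach a boundary edge $\bar z\sim\bar z'$ with $M_{\bar z}\in\{M',\overline{M'}\}$ and $M_{\bar z'}\notin\{M_{\bar z},\overline{M_{\bar z}}\}$; by Lemma~\ref{l:2sh} this pair falls into the ``every component meets every component'' case, which bridges the two components of $M_{\bar z}$ through a single component of $M_{\bar z'}$ and so puts all eight vertices of $M_{\bar z}$ into one component of $M$. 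It then remains to carry this one-component property back along the good path to $\bar a$. Across a step where the two fibers coincide, or where the corresponding two-coordinate subgraph is connected, the property transfers immediately through the matching edges. The hard part — and the step I expect to cost the most work — is a step where the two adjacent fibers are \emph{complementary} and the two-coordinate subgraph is decomposable: then the fibers share no edge and the in-plane structure keeps $C^0,C^1$ apart, so the connection must be routed through a further coordinate. I would resolve this by examining the relevant three-coordinate sub-Doob-graph with Lemma~\ref{l:2mds2} and the induction hypothesis, and by using Lemma~\ref{l:nei} (the distance-one neighbourhood of a multicomponent of $M$ is a multicomponent of $\overline M$) to control how components of $M$ extend across such edge-disjoint transitions; this is where the bulk of the technical effort lies.
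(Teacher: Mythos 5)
Your reduction of the lemma to the fiber dichotomy (either every fiber $M_{\bar z}=\{x_1:(x_1,\bar z)\in M\}$ lies in $\{M',\overline{M'}\}$, or the two components of $M'$ are joined inside $M$) is correct and matches the skeleton of the paper's argument, and your verification that the flip set $M_0$ is a \MMDS code is fine. The problem is that the entire substance of the lemma sits in the converse direction, and there your proposal stops exactly where the difficulty begins. You yourself isolate the critical step --- transferring the ``one component'' property back across an edge of the base graph where the two adjacent fibers are complementary, so that no vertical edge of $M$ crosses that interface --- and then write only that you ``would resolve this by examining the relevant three-coordinate sub-Doob-graph \dots this is where the bulk of the technical effort lies.'' That is a restatement of the obstacle, not a proof of the step. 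Moreover, the path-based strategy you chose (walk from $\bar a$ through good fibers to a boundary edge, then drag the connection back along the path) is precisely the formulation in which this obstruction is unavoidable, since such a path may have to cross many complementary interfaces; the auxiliary case split on decomposability of $M$ and the induction on $m+n$ do not remove it.

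The paper organizes the argument so that no backward transport across a complementary interface is ever attempted. Assuming the first alternative fails (the two components $C^0,C^1$ of $M'=M_{a,b}$ lie in distinct components of $M$), it normalizes $00\notin M_{a,b}$, introduces the \MMDS code $M_0=\{(\alpha,\beta):(00,\alpha,\beta)\notin M\}$, and takes $L_0$ to be the component of $M_0$ containing $(a,b)$. Over all of $L_0$ the fibers are forced to equal $M_{a,b}$: any deviating fiber shares the avoidance of $00$, hence is neither equal nor complementary to $M_{a,b}$, and by Lemma~\ref{l:2sh} one of its components meets both $C^0$ and $C^1$; this connection reaches $(a,b)$ through vertical edges between \emph{equal} fibers only, contradicting the standing assumption. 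The proof then defines $L_{i+1}$ as the distance-$1$ neighbourhood of $L_i$ and invokes Lemma~\ref{l:nei} to see that each layer is a multicomponent of $M_0$ or of $\overline{M_0}$, so the same frontier argument forces the fibers to alternate between $\overline{M_{a,b}}$ and $M_{a,b}$ layer by layer until $\VV D(m-1,n)$ is exhausted, which is exactly (\ref{eq:M1M0}). To make your write-up into a proof you would need either to adopt this layered breadth-first structure or to actually carry out the three-coordinate analysis you defer; as it stands the proposal has a genuine gap at its central step.
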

\begin{proof} 
  For  $\alpha = (\alpha_2,...,\alpha_m)\in \VV \Sh^{m-1}$ 
  and $(\beta_1,...,\beta_n)\in \VV K^n$, 
  denote
   $M_{\alpha,\beta} = \{ x_1 \in \VV\Sh \mid(x_1,\alpha_2,...,\alpha_m,\beta_1,...,\beta_n)\in M \}$.
   Clearly, $M_{\alpha,\beta}$ is a \MMDS code in $\Sh$. 
   If $M_{a,b}$ is connected, where $a=(a_2,...,a_m)$, $b=(b_1,...,b_n)$,
   then, trivially, $M'$ lies in a component of $M$ and
   the statement holds.
   
   Assume that $M_{a,b}$ is not connected and, moreover, 
   $M'$ intersects with two components of $M$.
   Without loss of generality, 
   suppose $(00,a_2,...,a_m,b_1,...,b_n)\in M$, 
   i.e., $00 \in M_{a,b}$.   
   Define the \MMDS code $M_0=\{(\alpha,\beta) \mid (00,\alpha,\beta)\in M \}$; 
   in particular, $(a,b)\in M_0$.
   Our goal is now to prove  (\ref{eq:M1M0}), or, equivalently,
   to prove that $M$ coincides with the \MMDS code $T\subset \VV D(m,n):$
  \begin{eqnarray}\label{eq:M1M0var}
     &&\chi_{T}(x_1,x_2,...,x_m,y_1,...,y_n) \nonumber \\
     &&{}\quad = \chi_{M_{a,b}}(x_1) + \chi_{M_0}(x_2,...,x_m,y_1,...,y_n) +1 \bmod 2.
  \end{eqnarray}
   Let $L_0$ be the component of $M_0$ that contains $(a,b)$.
   For every $(a',b')\in M_0$ adjacent to $(a,b)$ 
   we have $M_{a',b'}=M_{a,b}$ 
   (otherwise, by Lemma~\ref{l:2sh}, 
   $M'$ lies in one component, 
   contradicting our assumption).
   Similarly,  $M_{a',b'}=M_{a,b}$ 
   for every $(a',b')$ from $L_0$.
   Now we see that the set 
  \begin{eqnarray*}
     L=\{(x_1,x_2,...,x_m,y_1,...,y_n)\,:\, x_1\in M_{a,b},\ (x_2,...,x_m,y_1,...,y_n)\in L_0\},
  \end{eqnarray*}
  is a multicomponent of $T$ (\ref{eq:M1M0var})
  and, moreover, 
  is a subset of $M$.
  By Corollary~\ref{c:nei}, we have $M=T$.  
\end{proof}

\begin{lemma}[{\cite[Corollary~4.2, Remark~4.3]{Kro:2codes}}]\label{l:Kq}
  Let $G=K_q^n$ be the Cartesian product of $n$ copies of the complete graph $K_q$ of even order $q$.
  Let $M$ be a set of vertices of $G$ such that every clique of order $q$ contains exactly $q/2$ elements of $M$.
  Then the subgraph $G_{M}$ of $G$ induced by $M$ is disconnected if and only if the characteristic function $\chi_M$ of $M$ is decomposable
  into the sum $\chi_M(z) = \chi_{M'}(z')+ \chi_{M''}(z'') \bmod 2$, where $M'\subset \VV K_q^{n'}$, $M''\subset \VV K_q^{n''}$, 
  and $z'$, $z''$ are nonempty disjoint collections of variables from $z=(z_1,...,z_n)$ of length $n'$ and $n''$ respectively, $n'+n''=n$.
\end{lemma}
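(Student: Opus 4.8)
The plan is to prove the implication that if $G_M$ is \emph{not} connected then $\chi_M$ is decomposable; the reverse implication is immediate and records that the two alternatives are in fact mutually exclusive, since along any $G_M$-edge both $\chi_{M'}(z')$ and $\chi_{M''}(z'')$ stay constant, so the pair $(\chi_{M'},\chi_{M''})$ is constant on each connected component while two of its values occur. For the main direction I would induct on $n$. The base $n=1$ is trivial, as then $M$ is a clique of order $q/2$. The case $n=2$ is the $K_q$-analog of Lemma~\ref{l:2mds2} and I would argue directly on the $q\times q$ array: form the bipartite incidence graph between rows and columns, with one edge per element of $M$; since every row and every column carries exactly $q/2$ elements, a disconnection of $G_M$ splits rows and columns into exactly two classes of $q/2$ rows and $q/2$ columns, and each row of a class then coincides with its whole column-class. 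Hence the rows take only two complementary values, which is precisely the statement $\chi_M(z_1,z_2)=\chi_{M'}(z_1)\oplus\chi_{M''}(z_2)$.

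For the inductive step I would slice along the last coordinate into the $q$ layers $M_a=\{z'\in\VV K_q^{n-1}\mid (z',a)\in M\}$, each again line-balanced in $K_q^{n-1}$. The engine is the observation that a line (clique of order $q$) carries $q/2$ pairwise adjacent elements of $M$, which therefore lie in a single component of $G_M$; in particular each last-direction ``column'' is monochromatic for any fixed union $C$ of components. This produces a single sign function $\sigma\colon\VV K_q^{n-1}\to\{+,-\}$, the same for all layers, with $C=\{(z',a)\in M\mid\sigma(z')=+\}$, and such that $\Sigma^{+}:=\{z'\mid\sigma(z')=+\}$ contains no $G_{M_a}$-edge leaving it, for every $a$ simultaneously.

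The first case is that all layers $M_a$ are connected. Then $C$ meets each layer in $\emptyset$ or in all of $M_a$; writing $B=\{a\mid M_a\subseteq C\}$, the column-balance condition forces $|B|=q/2$ and pins down the layer supports, and a short modulo-$2$ computation yields the explicit splitting $\chi_M(z',a)\equiv 1\oplus[a\in B]\oplus[z'\in\Sigma^{+}]\pmod 2$, so coordinate $n$ splits off and we are done. In the complementary case some layer $M_{a_0}$ is disconnected, and the induction hypothesis decomposes $\chi_{M_{a_0}}$ along a nontrivial partition of $\{1,\dots,n-1\}$; the remaining task is to propagate this partition to all of $K_q^n$.

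I expect this propagation to be the main obstacle, and the sign function is the tool for it. Examining the two-dimensional sub-grids spanned by a coordinate $i<n$ together with the last coordinate, and applying the case $n=2$ already established above, shows that on every line $\ell$ one has $|\Sigma^{+}\cap\ell|\in\{0,q/2,q\}$, so each coordinate is a priori either \emph{inert} (every parallel line meets $\Sigma^{+}$ in $0$ or $q$ points) or \emph{balanced} (every parallel line meets it in $q/2$ points). If the mixed behaviour can be excluded for each coordinate, the inert and the balanced coordinates furnish the two sides of a global bipartition, and identifying the corresponding factors gives the decomposition. Ruling out mixing, however, means turning these purely local two-dimensional constraints into a global product structure, and this is where the combinatorial work concentrates; I anticipate it mirroring the alternating-neighbourhood scheme of Lemma~\ref{l:nei} and Lemma~\ref{l:Sh-to-K}, the essential difference being that here lines are cliques, so the recursion is driven by the layer decomposition rather than by splitting off an individual Shrikhande coordinate.
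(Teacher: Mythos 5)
You should first note that the paper contains no internal proof of Lemma~\ref{l:Kq} to compare against: it is imported verbatim from \cite[Corollary~4.2, Remark~4.3]{Kro:2codes}, where the full (and substantial) argument lives. Judged on its own terms, your proposal is sound up to a clearly identified point. The exclusivity direction, the base $n=1$, and the $n=2$ grid analysis are correct: the row/column incidence argument does force exactly two components, each spanning $q/2$ rows and $q/2$ columns with complementary supports, which is precisely the asserted splitting. Your ``engine'' is also valid --- every line meets $M$ in a $q/2$-clique of $G_M$, so last-direction columns are monochromatic with respect to any union $C$ of components --- and Case A (all layers connected) correctly splits off the last coordinate: $|B|=q/2$ follows from the existence of both a plus- and a minus-column, and the formula $\chi_M(z',a)\equiv 1\oplus[a\in B]\oplus[z'\in\Sigma^{+}]$ checks out.

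The gap is exactly where you anticipate it, and it is not a technicality but the heart of the theorem. In Case B you establish only the local constraint $|\Sigma^{+}\cap\ell|\in\{0,q/2,q\}$ and then explicitly defer the steps that would close the induction: excluding \emph{mixed} directions (some parallel lines balanced, others inert), showing that $\sigma$ then factors through the balanced coordinates, and converting a decomposition of $\chi_{\Sigma^{+}}$ into one of $\chi_M$ itself. This last conversion was immediate in Case A but is not in Case B, since the trace $M_a\cap\Sigma^{+}$ is merely a union of components of $G_{M_a}$ and may vary with $a$. The local trichotomy is also weaker than it looks: the two halves of a disconnected two-dimensional section need not lie in distinct \emph{global} components of $G_M$, so nothing in the two-dimensional picture distinguishes inert lines from balanced ones within a single direction, and a genuinely global argument is required. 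The alternating-neighbourhood scheme of Lemmas~\ref{l:nei} and~\ref{l:Sh-to-K} that you hope to mirror will not supply this for free: in the paper those lemmas treat a single Shrikhande coordinate against the rest and themselves rest on Lemma~\ref{l:2mds2}; tellingly, in the proof of Proposition~\ref{p:key} the paper invokes Lemma~\ref{l:Kq} precisely as the black box disposing of the configuration you are stuck on, namely disconnectedness that survives in $K_{16}^m\times K_4^n$. So what remains unproven in your sketch is, in essence, the entire content of the cited result from \cite{Kro:2codes}, and the attempt cannot be accepted as a proof until the mixed-direction case is actually ruled out.
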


\begin{lemma}\label{l:K16K4}
  Let $G^*=G^*(m,n)=K_{16}^m\times K_4^n$ be the Cartesian product of $m$ copies of the complete graph $K_{16}$ of order $16$
  and $n$ copies of the complete graph $K_{4}$ of order $4$.
  Let $M^*$ be a set of vertices of $G^*$ such that every clique $K$ maximal by inclusion (it follows that $|K|=4$ or $|K|=16$) contains exactly $|K|/2$ elements of $M^*$.
  Then the subgraph $G^*_{M^*}$ of $G^*$ induced by $M^*$ is disconnected if and only if the characteristic function $\chi_{M^*}$ of $M^*$ is decomposable
  into the sum $\chi_{M^*}(z) = \chi_{M'}(z')+ \chi_{M''}(z'') \bmod 2$, where $M'\subset \VV G^*(m',n')$, $M''\subset \VV G^*(m'',n'')$, 
  and $z'$, $z''$ are nonempty disjoint collections of variables from $z=(x_1,...,x_m,y_1,...,y_n)$.
\end{lemma}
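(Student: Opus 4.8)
The plan is to deduce Lemma~\ref{l:K16K4} from Lemma~\ref{l:Kq} (the uniform case $q=16$) by a ``blow-up'' that turns each $K_4$-coordinate into a $K_{16}$-coordinate without destroying the halving condition. Identify $\VV K_{16}$ with $\VV K_4\times\{0,1,2,3\}$, calling the second factor a \emph{dummy}. Given $M^*$, define $\hat M\subset\VV K_{16}^{m+n}$ as the full preimage of $M^*$ under the projection $\pi$ that keeps the original $K_{16}$-coordinates and replaces each blown-up coordinate $(y_j,d_j)$ by its real part $y_j$; that is, a vertex lies in $\hat M$ iff forgetting all dummies lands it in $M^*$.

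First I would check that $\hat M$ is a halving of $K_{16}^{m+n}$, i.e.\ meets every maximal ($16$-vertex) clique in exactly $8$ vertices. On a line varying an original $K_{16}$-coordinate this is immediate, since there $\hat M$ restricts exactly as $M^*$ does to the corresponding $K_{16}$-line, giving $16/2=8$. On a line varying a blown-up coordinate $(y_j,d_j)$, membership is independent of the dummy $d_j$; the $K_4$-halving of $M^*$ gives $2$ admissible values of $y_j$, each with $4$ choices of $d_j$, again $2\cdot 4=8$. This second (mixed) count is the genuinely new point, and it is exactly why one blows \emph{up} to $K_{16}$ rather than collapsing down to $K_4$: an $8$-subset of $K_{16}$ need not be a halving of $K_4\times K_4$, so the opposite reduction would fail.

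Next I would show that $\pi$ sets up a connectivity correspondence: $G^*_{M^*}$ is connected if and only if the subgraph of $K_{16}^{m+n}$ induced by $\hat M$ is connected. For the easy direction, note that $\pi$ carries an edge of $\hat M$ either to an edge of $G^*_{M^*}$ (when the differing coordinate is an original $K_{16}$-coordinate, or a blown-up one whose real part changes) or to a single vertex (when only a dummy changes); hence $\pi$ maps connected sets onto connected sets, and connectivity of $\hat M$ forces connectivity of $M^*=\pi(\hat M)$. For the converse I would use that each fiber $\pi^{-1}(w)$ is connected (its vertices differ only in dummies, and successive single-dummy changes are edges of $\hat M$) and that the fibers over two adjacent vertices of $M^*$ are joined by an edge of $\hat M$ (lift the differing coordinate while keeping the dummies fixed); propagating along a path in $G^*_{M^*}$ then joins any two fibers. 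This converse, through the fiber argument, is the main structural obstacle, since it is what guarantees that the blow-up neither creates nor destroys components.

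Finally I would transfer decomposability back. If the induced subgraph on $\hat M$ is disconnected, Lemma~\ref{l:Kq} (with $q=16$ and $m+n$ coordinates) yields $\chi_{\hat M}(Z)\equiv\chi_{\hat M'}(Z')+\chi_{\hat M''}(Z'')\bmod 2$ over a partition of the $m+n$ coordinates into nonempty disjoint collections $Z'$, $Z''$. Since the coordinates of $\hat M$ correspond bijectively to those of $M^*$, this induces a nontrivial partition of the coordinates of $G^*(m,n)$; restricting the identity to all dummies equal to $0$, where $\chi_{\hat M}$ agrees with $\chi_{M^*}$, gives $\chi_{M^*}(z)\equiv\chi_{M'}(z')+\chi_{M''}(z'')\bmod 2$ with $M'\subset\VV G^*(m',n')$ and $M''\subset\VV G^*(m'',n'')$. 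Combining the pieces: if $G^*_{M^*}$ is connected we are in the first alternative, and otherwise disconnectedness passes to $\hat M$, Lemma~\ref{l:Kq} decomposes $\chi_{\hat M}$, and the restriction to zero dummies decomposes $\chi_{M^*}$ compatibly with the coordinate types. No single step is a hard estimate; the difficulty is entirely in arranging the dummy identification so that the halving condition, the connectivity correspondence, and the coordinate-type compatibility of the decomposition all hold at once.
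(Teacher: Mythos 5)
Your proposal is correct and follows essentially the same route as the paper: the paper's proof maps each vertex $(x_1,\dots,x_m,y_1,\dots,y_n)$ to the $4^n$ vertices $(x_1,\dots,x_m,(y_1,z_1),\dots,(y_n,z_n))$ of $K_{16}^{m+n}$, i.e.\ exactly your dummy blow-up, and then invokes Lemma~\ref{l:Kq}. The only difference is that the paper leaves the halving check, the connectivity correspondence, and the transfer of decomposability as ``easy to see,'' whereas you spell them out.
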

\begin{proof}
We map each vertex $(x_1,...,x_m,y_1,...,y_n)$ of $G^*$  to $4^n$ elements 
$$(x_1,...,x_m,(y_1,z_1),...,(y_n,z_n)), \quad z_i\in\{0,1,2,3\}, \quad i=1,...,n,$$ 
which will be treated as vertices of $K_{16}^{m+n}$.
Then $M^*$ is mapped into a set $M$ of vertices of $K_{16}^{m+n}$. 
It is easy to see that $M$ satisfies the hypothesis of Lemma~\ref{l:Kq}. 
Moreover, the subgraph $G_M$ is connected if and only if the subgraph $G^*_{M^*}$ is connected;
and the characteristic function $\chi_{M}$ is decomposable if and only if $\chi_{M^*}$ is decomposable.
Then, the statement follows from Lemma~\ref{l:Kq}. 
\end{proof}

\begin{proof}[of Proposition~\ref{p:key}]
Assume that $M$ is a \MMDS code in $D(m,n)$, where $(m,n)\ne(1,0)$. 
If $M$ is connected, then it is indecomposable by Lemma~\ref{l:No-comp}.

Assume that $M$ induces a disconnected subgraph of $D(m,n)$. 
Since  $K_{16}^m \times K_4^n$ is obtained from $D(m,n)$ by adding some edges,
the subgraph of $K_{16}^m \times K_4^n$ induced by $M$ can be connected or not.

If $M$ induces a disconnected subgraph of $K_{16}^m \times K_4^n$,
then the statement follows from Lemma~\ref{l:K16K4}.

It remains to consider the case when the subgraph of $K_{16}^m \times K_4^n$ induced by $M$ is connected.
This means that some Shrikhande subgraph of $D(m,n)$ intersects with two components of $M$
(then, adding the absent edges in this subgraph connects these two components), in particular, $m\ge 1$.
But then $M$ is decomposable by Lemma~\ref{l:Sh-to-K}.
This proves the proposition.
(However, to make the situation clear, we can further note that the last case is contradictory:
the decomposability of $M$ implies that the induced subgraph of $K_{16}^m \times K_4^n$ is not connected.)
\end{proof}

\section{Proof of the main theorem}\label{s:proof}
\begin{proof}[of Theorem~\ref{th:main}]
In the case $m=0$, 
the statement coincides with the main theorem of \cite{KroPot:4}. 
The case $(m,n)=(1,0)$ is trivial.
Consider an MDS code $C$ in $D(m,n)$, 
$m>0$, $m+n>1$.
  For  $\alpha = (\alpha_2,...,\alpha_m)\in \VV \Sh^{m-1}$ 
  and $\beta=(\beta_1,...,\beta_n)\in \VV K^n$, 
  denote
   $C_{\alpha,\beta} = \{ x_1 \in \VV\Sh \mid(x_1,\alpha_2,...,\alpha_m,\beta_1,...,\beta_n)\in C \}$.

Assume that for all $\alpha$ and $\beta$ 
the MDS code $C_{\alpha,\beta}$ 
is equivalent to the code shown 
in Fig.~\ref{f:mds}(a), 
i.e., one of 
$C_{00}=\{00,02,20,22\}$, 
$C_{01}=\{01,03,21,23\}$, 
$C_{10}=\{10,12,30,32\}$, 
$C_{11}=\{11,13,31,33\}$.
Then, it is easy to see that $C$ is reducible:
$$ C = \{ (y,\bar y',\bar x') \in D(m,n) \mid f(y)=f'(y',x') \} $$
for some 
$f':\VV D(m-1,n) \to \{00,\linebreak[2]01,\linebreak[2]10,\linebreak[2]11\}$ 
and for 
$f:\VV D(1,0) \to \{00,\linebreak[2]01,\linebreak[2]10,\linebreak[2]11\}$ 
satisfying $f(y)=ij$ 
for every $y\in C_{ij}$.

Otherwise, 
we may assume without loss of generality that 
$C_{a,b} = \{00,\linebreak[2]02,\linebreak[2]21,\linebreak[2]23 \}$ 
(Fig.~\ref{f:mds}(b))
for some $a$ and $b$.
To utilize Proposition~\ref{p:key}, 
we consider the \MMDS code
$M = C \cup (C+ (01,00,...,00))$. 
We will see that it is decomposable.
Define the \MMDS code 
$M_0 = \{ (x_2,...,x_n,y_1,...,y_n) \mid (00,x_2,...,x_n,y_1,...,y_n)\in C \mbox{ or } (01,x_2,...,x_n,y_1,...,y_n)\in C \}$.
In particular, $(a,b)\in M_0$.
Similarly to $C_{\alpha,\beta}$, 
we denote
$$M_{\alpha,\beta} = \{ x_1 \in \Sh \mid(x_1,\alpha_2,...,\alpha_m,\beta_1,...,\beta_n)\in M \}.$$
In particular, 
$M_{a,b} = \{00,01,02,03,20,21,22,23\}$ 
(Fig.~\ref{f:2mds}(a)).

For every $(a',b')\in M_0$ at distance $1$ from $(a,b)$, 
the MDS code  $C_{a',b'}$ contains $01$ and, moreover, is disjoint with $C_{a,b} = \{00,02,21,23 \}$.
There is only one such code, 
$C_{a',b'} = \{01,03,20,22 \}$. 
It follows that 
$$M_{a',b'} = \{00,01,02,03,20,21,22,23\}=M_{a,b}.$$
Similarly, $M_{\alpha,\beta} = M_{a,b}$ 
for all $(\alpha,\beta)$ 
from the same connected component $L_0$ of $M_0$ as $(a,b)$.
Now, we can apply the same argument as in the proof of Lemma~\ref{l:Sh-to-K}:
define 
$$
T:\quad\chi_T = \chi_{M_{a,b}}(x_1) + \chi_{M_0(x_2,...,x_m,y_1,...,y_n)}+1 \bmod 2
$$
and 
$$
L=\{(x_1,x_2,...,x_m,y_1,...,y_n)\,:\, x_1\in M_{a,b},\ (x_2,...,x_m,y_1,...,y_n)\in L_0\};
$$
note that $L$ is a multicomponent of $T$ and a subset of $M$,
and hence $M=T$ by Corollary~\ref{c:nei}.

So, we have that $C$ is included in a decomposable \MMDS code $M$. 
Consider the decomposition (\ref{eq:can2MDS}) of $M$ into
indecomposable \MMDS codes $M_i$, $i=1,...,k$:
$$
\chi_M(\bar x,\bar y) \equiv 
\chi_{M_1} (\tilde x_1,\tilde y_1) + \ldots + 
\chi_{M_k}(\tilde x_k,\tilde y_k) + \sigma \bmod 2,\qquad  k\ge 2.
$$
By Proposition~\ref{p:key}, all $M_i$ are connected, except some \MMDS codes in $\Sh$,
which are equivalent to the code in Fig~\ref{f:2mds}(a).
Assume without loss of generality that the first $m'$ codes $M_1$, ..., $M_{m'}$ are not connected
and that they correspond to the first $m'$ variables $x_1$, ..., $x_{m'}$.
Since $M$ includes at least one MDS code $C$, all $M_i$, $i=1,...,k$ are bipartite, as well as their complements.
It follows that for each $i$ there is a latin coloring $f_i:\VV D(m_i,n_i)\to \VV K$ such that
$M_i = \{ z\in \VV D(m_i,n_i) \mid f_i(z)\in\{10,11\} \}$.
Define the linear \MMDS code $I$ by
\begin{eqnarray*}
\chi_I(x_1,...,x_{m'},z_{1},...,z_{k-m'}) &= &
\chi_{M_1}(x_1)+\ldots +\chi_{M_{m'}}(x_{m'})
\\ &&\mbox{}+
\chi_{\{10,11\}}(z_1)+\ldots +\chi_{\{10,11\}}(z_{k-m'})+
\sigma.
\end{eqnarray*}
It is straightforward that
$$ M = \{ (\bar x,\bar y)\in \VV D(m,n) \mid 
(x_1,...,x_{m'},f_{m'+1}(\tilde x_{m'+1},\tilde y_{m'+1}),...,f_{k}(\tilde x_{k},\tilde y_{k})) \in I \}.$$
Then, for every MDS code $S \subset I$, the MDS code 
\begin{equation}\label{eq:S-to-C}
 \{ (\bar x,\bar y)\in \VV D(m,n) \mid 
(x_1,...,x_{m'},f_{m'+1}(\tilde x_{m'+1},\tilde y_{m'+1}),...,f_{k}(\tilde x_{k},\tilde y_{k})) \in S \}
\end{equation}
is a subset of $M$. 
Since $I$ and $M$ consist of the same number of components, 
they include the same number of MDS codes.
In particular, $C$ is also representable 
in the form (\ref{eq:S-to-C}) for some $S$.
If $m'=m$ and $k=m+n$, then $C$ is semilinear. 
Otherwise, $k<m+n$ or $k=m+n$ and $m'<m$, and $C$ is reducible.
\end{proof}
\begin{remark}
Theorem~\ref{th:main} gives a recursive constructive description of MDS codes in $D(m,n)$. 
The class of semilinear MDS codes is constructive directly; 
the reducibility reduces the situation to the graphs of smaller diameter.
In the current paper, we prove the case $m>0$, 
but it is important to understand that the case $m=0$ is necessary for the description 
because some factors in the decomposition can have only $K$-coordinates.
It should also be noted that the approach used in this paper cannot be directly adopted to the case $m=0$.
Existence of at least one Shrikhande coordinate is essential for our current proof.
We use it to embed the MDS code into an \MMDS code having at least four connected components.
In that worst case, 
the original code is reduced to a code 
with smaller number of Shrikhande coordinates, 
the total number of coordinates remaining the same.
If $m=0$, we still can apply the same technique to a $K$-coordinate, 
but in the worst case, the \MMDS code would have only two connected components;
the induced decomposition of the embedded MDS code would be trivial and 
would not imply the reducibility.
The difficulties hidden in this case are solved 
in the papers \cite{PotKro:asymp,Kro:2codes,Kro:n-3,KroPot:4},
containing the complete proof of Theorem~\ref{th:main} for $m=0$.
\end{remark}

\section[MDS codes and 2 x MDS codes as sets with extremal properties; 1.5 x MDS codes]{MDS codes and \MMDS codes as sets with extremal properties; \MMMDS codes}\label{s:crc}

We already know that the MDS codes in a Doob graph are exactly
the maximum (by cardinality) independent sets 
(actually, we take this property as the definition of the MDS codes).
In this section, we show that the \MMDS codes also meet some extremal property.
Namely, they are exactly the sets with maximum edge boundary (cut).
Moreover, the MDS codes and the \MMDS codes
define equitable $2$-partitions 
of the Doob graph with minimum eigenvalue.

\subsection{The maximum cut}\label{s:cut}
The \emph{edge boundary} (also known as \emph{cut}) of a set $V$ of vertices of a graph $G=(\VV G, \EE G)$
is the set of edges $\{ \{v,w\}\in \EE G \mid v\in V,\,w\not\in V\}$.

\begin{proposition}\label{p:eb}
 (a) The maximum size of the edge boundary of a vertex set in $D(m,n)$ is 
 $(2m+n)4^{2m+n}$. (b) A vertex set $M$ has the maximum edge boundary if and only if
 it is a \MMDS code.
\end{proposition}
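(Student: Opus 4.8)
The plan is to exploit the Cartesian-product structure of $D(m,n)$: every edge lies in a unique ``elementary'' subgraph isomorphic to $\Sh$ or to $K_4$, obtained by fixing all coordinates but one. Consequently, for any vertex set $M$ the edge boundary splits as a sum, over all such elementary subgraphs $H$, of the local boundaries of $M\cap \VV H$ inside $H$. The global maximum is therefore attained exactly when every summand is locally maximal, and the whole problem reduces to two facts about the factors: the maximum cut of $K_4$ and of $\Sh$, together with a description of the sets attaining them.

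For $K_4$ the computation is immediate: a choice of $t$ of the $4$ vertices gives $t(4-t)$ boundary edges, which is maximized, with value $4$, exactly when $t=2$. For $\Sh$ I would use the spectral bound for the maximum cut. Writing $x\in\{\pm1\}^{16}$ for the $\pm1$ indicator of a set $M\subseteq\VV\Sh$ and $A$ for the adjacency matrix, the number of boundary edges equals $\frac{|\EE\Sh|}{2}-\frac{x^\top A x}{4}$. Since $\Sh$ is strongly regular with eigenvalues $6,2,-2$, its minimum eigenvalue is $-2$, so $x^\top A x \ge -2\,\|x\|^2 = -32$; with $|\EE\Sh|=48$ this yields $24+8=32$ as the maximum cut of $\Sh$. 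Equality forces $x^\top A x=-32$, i.e.\ $x$ to be a $(-2)$-eigenvector of $A$, which is equivalent to the equitable condition that every vertex of $M$ has exactly $2$ neighbours in $M$ and every vertex outside $M$ has exactly $4$ (in particular $|M|=8$).

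Combining the two local bounds and counting the $4^{2m+n-1}$ copies of $K_4$ per $K$-coordinate and the $4^{2m+n-2}$ copies of $\Sh$ per $\Sh$-coordinate gives the upper bound $n\cdot 4^{2m+n}+m\cdot 2\cdot 4^{2m+n}=(2m+n)4^{2m+n}$ of part~(a); a \MMDS code attains it, since in each elementary $K_4$ it meets $M$ in $2$ vertices (local cut $4$) and in each elementary $\Sh$ it meets $M$ in two disjoint MDS codes, which a short computation shows to have cut $32$. For part~(b), equality throughout forces every elementary $K_4$ to contain exactly $2$ elements of $M$ and every elementary $\Sh$ to meet $M$ in a maximum cut of $\Sh$; the first condition is precisely the clique clause in the definition of a \MMDS code. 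It then remains to identify the maximum cuts of $\Sh$ with the \MMDS codes of $\Sh$, i.e.\ with the unions of two disjoint MDS codes displayed in Fig.~\ref{f:2mds}.

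The main obstacle is exactly this last identification inside $\Sh$. The equitable condition produced by the spectral equality only says that $M$ induces a $2$-regular subgraph on $8$ vertices (a disjoint union of cycles), and a priori this admits configurations such as a triangle together with a $5$-cycle: these are equitable and have cut $32$, yet are not unions of two independent MDS codes, and would contradict the statement if they occurred in $\Sh$. I would rule them out by a finite verification, reducing modulo $\mathrm{Aut}(\Sh)$, which simultaneously settles the whole chain of equivalences at the factor level, namely that in $\Sh$ the maximum-cut $8$-sets, the sets meeting the equitable condition above, and the \MMDS codes (Fig.~\ref{f:2mds}) all coincide; concretely, no triangle of $\Sh$ can be an isolated component of such an $M$. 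Once this is settled, the equality analysis reproduces verbatim the two clauses defining a \MMDS code in $D(m,n)$, and the converse ``if'' direction follows from the local cut values $4$ and $32$, giving the equivalence of part~(b).
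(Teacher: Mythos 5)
Your proposal is correct and follows the same global reduction as the paper --- splitting the edge boundary over the elementary $\Sh$- and $K_4$-subgraphs and forcing local extremality in each factor --- but it handles the Shrikhande factor by a genuinely different method. Where you invoke the spectral bound ($\lambda_{\min}(\Sh)=-2$ gives local cut at most $32$, with equality iff the $\pm1$ indicator is a $(-2)$-eigenvector, i.e., iff $(M,\overline M)$ is equitable with quotient matrix $\left[{2 \atop 4}\ {4 \atop 2}\right]$), the paper argues combinatorially: every triangle of $\Sh$ has at most two boundary edges and every edge lies in the same number (two) of triangles, so at most $2/3$ of the $48$ edges can be boundary edges. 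Your spectral route is clean and connects directly to the equitable-partition viewpoint of Proposition~\ref{p:cr}(b); the paper's triangle count is more elementary but yields the same byproduct (each triangle meets both colors), which it then exploits in the equality analysis. The real divergence is in classifying the extremal sets of $\Sh$: the paper runs an explicit case analysis over the twelve induced $4$-cycles (cases 4:0, 3:1, 2:2 of Figures~\ref{f:4-0}, \ref{f:3-1}, \ref{f:2-2}), propagating the triangle constraint until the coloring is forced to be one of the three codes of Fig.~\ref{f:2mds}, whereas you correctly observe that $2$-regularity of both $M$ and $\overline M$ does not by itself exclude components such as a triangle plus a $5$-cycle, and you delegate the exclusion to a finite verification modulo $\mathrm{Aut}(\Sh)$. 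That verification is feasible (the constraints $|M|=8$, $2$-regularity on both sides, and $|\mathrm{Aut}(\Sh)|=192$ make it a small search) and it would succeed, since the claim is true; but be aware that this unexecuted check is exactly where the combinatorial content of part (b) lives --- the paper's $4$-cycle analysis is that check carried out by hand --- so your write-up is complete only once you actually perform it or replace it by such an argument.
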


\begin{proof}
It is straightforward from the definition that a \MMDS code has the edge boundary
of size $(2m+n)4^{2m+n}$. 
It remains to show the upper bound for the statement a) 
and the `only if' part of b).

(a) $(2m+n)4^{2m+n}$ is $2/3$ of the total amount of edges in $D(m,n)$.
Since $D(m,n)$ is the Cartesian product 
of several copies of the Shrikhande graph $\Sh$ 
and several copies of the complete graph $K$ of order $4$, it is sufficient 
to show that the number of boundary edges in each of these two graphs cannot be
larger than $2/3$ of the total amount of edges, i.e., 
$\frac{2}{3}\cdot 48 = 32$ in the case of $\Sh$ 
and $\frac{2}{3}\cdot 6 = 4$ for $K$. For $K$, this is trivial.
For $\Sh$, this follows from the fact that each triangle 
(complete subgraph of order $3$) cannot have more than $2$ boundary edges,
while every edge belong to the same number of triangles.

(b) We first note that 
it is sufficient to prove the statement for the graphs $\Sh$ and $K$.
Indeed, if we have a set $M$ whose edge boundary size 
is $2/3$ of the total amount of edges in $D(m,n)$,
then the same proportion takes place in every Shrikhande or $K$ subgraph
(otherwise, there is a contradiction with p.(a) for such subgraphs).
If we already have the statement for $\Sh$ and $K$, 
then $M$ is a \MMDS code by the definition.

For $K$, it is trivial. 
Let us consider the Shrikhande graph and a vertex set $M$ with
edge bound of size $32$. We color the vertices of $M$ by black and the other 
vertices by gray. From the previous paragraph we know that each triangle has
at least one black and one gray vertex; 
we will always keep this fact in mind 
in the rest of the proof.
The Shrikhande graph has $12$ induced
(i.e., without chords) $4$-cycles. We consider three cases.

(4:0) There is an induced cycle colored into one color. 
Then, the colors of the other vertices are uniquely reconstructed
(see Fig~\ref{f:4-0}),
leading to the situation shown in Fig.~\ref{f:2mds}(a).
\begin{figure}
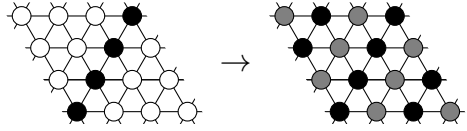

\centering
\raisebox{-7mm}{\sh{nz nz nz zz}{nz nz zz nz}{nz zz nz nz}{zz nz nz nz}}\ 
$ \rightarrow $ \raisebox{-7mm}{\sh{xz zz xz zz}{zz xz zz xz}{xz zz xz zz}{zz xz zz xz}}
\caption{There is a cycle of type 4:0}
\label{f:4-0}
\end{figure}

(3:1) There is a induced cycle with only one black vertex or only one grey vertex. 
The colors of six other vertices are uniquely reconstructed.
Choosing the color of any of the six remaining vertices uniquely leads
to one of two mutually symmetric colorings, see Fig~\ref{f:3-1}.
The result corresponds to Fig.~\ref{f:2mds}(b). 

\begin{figure}
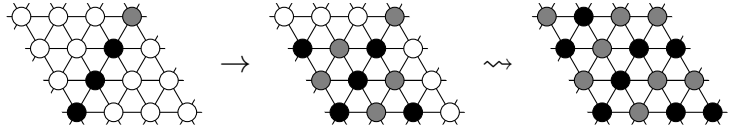

\centering
\raisebox{-7mm}{\sh{nz nz nz xz}{nz nz zz nz}{nz zz nz nz}{zz nz nz nz}}\ 
$ \!\!\!\rightarrow \!\!\!$ \raisebox{-7mm}{\sh{nz nz nz xz}{zz xz zz nz}{xz zz xz nz}{zz xz zz nz}}\ 
$\!\!\! \rightarrow \!\!\!$ \raisebox{-7mm}{\sh{xz zz xz xz}{zz xz zz zz}{xz zz xz xz}{zz xz zz zz}}%
$\!\!\!$or$\!\!\!$\raisebox{-7mm}{\sh{zz xz zz xz}{zz xz zz xz}{xz zz xz zz}{zz xz zz xz}}
\caption{There is a cycle of type 3:1}
\label{f:3-1}
\end{figure}

(2:2) All induced cycles have two black and two gray vertices.
Then, there is an induced cycle containing two neighbor black vertices,
see Fig~\ref{f:2-2}. Four vertices out of this cycle are uniquely colored.
Choosing the color of any of the eight other vertices and using
only the hypothesis of the case (2:2), we uniquely color the
seven remaining vertices, see Fig~\ref{f:2-2}. 
Both $M$ and its complement are equivalent to the set shown in Fig.~\ref{f:2mds}(c).
\begin{figure}
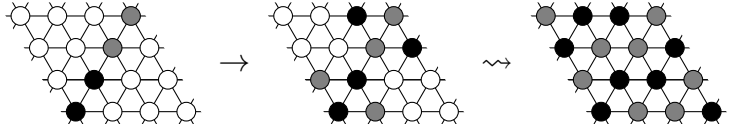

\centering
\raisebox{-7mm}{\sh{nz nz nz xz}{nz nz xz nz}{nz zz nz nz}{zz nz nz nz}}\ 
$ \!\!\!\rightarrow\!\!\! $ \raisebox{-7mm}{\sh{nz nz zz xz}{nz nz xz zz}{xz zz nz nz}{zz xz nz nz}}\ 
$ \!\!\!\rightarrow\!\!\! $ \raisebox{-7mm}{\sh{xz zz zz xz}{zz xz xz zz}{xz zz zz xz}{zz xz xz zz}}%
$\!\!\!$or$\!\!\!$\raisebox{-7mm}{\sh{zz xz zz xz}{xz zz xz zz}{xz zz xz zz}{zz xz zz xz}}\
\caption{All cycles are of type 2:2}
\label{f:2-2}
\end{figure} 
\end{proof}

\subsection{Equitable partitions}\label{s:ep}
A partition $(P_1,...,P_r)$ of the vertex set of a graph into $r$ nonempty cells 
is said to be an \emph{equitable partition}, see e.g. \cite[9.3]{GoRo}
(regular partition \cite[11.1B]{Brouwer}, 
partition design \cite{CCD:92}, 
perfect coloring \cite{FDF:PerfCol}), 
if there is an $r\times r$ matrix $(s_{ij})_{i,j=1}^r$
(the \emph{quotient matrix}) 
such that for every $i$ and $j$ from $1$ to $r$ 
every vertex from $P_i$ 
has exactly $s_{i,j}$ neighbors from $P_j$.
It is known that each eigenvalue of the quotient matrix
is an eigenvalue of the graph \cite{Brouwer}
(in a natural way, an eigenvector of the quotient matrix generates an eigenfunction of the graph).
If the graph is regular, 
then its degree is always an eigenvalue of the quotient matrix.

The graph $D(m,n)$ has the $d+1$ eigenvalues $-d$, $-d+4$, \ldots, $3d$, where $d=2m+n$ is the graph diameter.
In $\VV D(m,n)$, we consider an equitable $2$-partition $(C, \overline C)$
such that the quotient matrix has two eigenvalues $-d$ and $3d$, 
the smallest and the largest eigenvalues of $D(m,n)$. 
This matrix has the form 
$\left[ {a \atop a+d}\ {3d-a \atop 2d-a}  \right]$ for some $a$ from $0$ to $2d$.
The cases $a=0$, $a=d$, and $a=2d$ correspond to $C$ being an $MDS$ code, a \MMDS code, 
and the complement of an $MDS$ code.

\begin{proposition}\label{p:cr}
(a) A  $2$-partition $(C,\overline C)$ of $\VV D(m,n)$ is equitable with the quotient matrix
$\left[ {0 \atop 2m+n}\ {6m+3n \atop 4m+2n}  \right]$ 
(or $\left[ {4m+2n \atop 6m+3n}\ {2m+n \atop 0}  \right]$)
if and only if $C$ (respectively, $\overline C$) is an $MDS$ code. 
(b) 
A $2$-partition $(M,\overline M)$ of $\VV D(m,n)$ is equitable with the quotient matrix
$\left[ {2m+n \atop 4m+2n}\ {4m+2n \atop 2m+n}  \right]$ if and only if $M$ is a \MMDS code.
\end{proposition}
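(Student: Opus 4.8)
The plan is to verify both directions of each part by a coordinatewise neighbor count, using that $D(m,n)$ is regular of degree $6m+3n=3(2m+n)$ on $4^{2m+n}$ vertices. For part~a), I would first do the forward direction. If $C$ is an MDS code it is independent, so every $v\in C$ has $0$ neighbors in $C$ and all $6m+3n$ in $\overline C$, which is the first row. For $v\in\overline C$ I would count neighbors in $C$ direction by direction: in each of the $n$ complete-graph cliques through $v$ the MDS property forces exactly one $C$-vertex, which is then adjacent to $v$, contributing $1$; in each of the $m$ Shrikhande subgraphs through $v$, Lemma~\ref{l:subgraph} says $C$ meets it in an MDS code of $\Sh$, and a direct check on the explicitly listed codes of Figure~\ref{f:mds} (equivalently, equality in the ratio bound for the coclique) shows every outside vertex has exactly $2$ neighbors in it, contributing $2$. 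The total is $2m+n$, the second row. For the converse I would read independence off the first row and then count the $C$--$\overline C$ edges two ways: $|C|(6m+3n)=|\overline C|(2m+n)$, i.e. $3|C|=|\overline C|=4^{2m+n}-|C|$, so $|C|=4^{2m+n-1}$; an independent set of this, the maximal, cardinality is an MDS code. The reversed-matrix case is the same argument applied to $\overline C$.

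For part~b), the forward direction is again a local count. If $M$ is a \MMDS code and $v\in M$, then in each of the $n$ complete-graph cliques through $v$ exactly one further vertex lies in $M$ and is adjacent to $v$, contributing $1$; in each of the $m$ Shrikhande subgraphs through $v$, $M$ meets it in a union $A\cup B$ of two disjoint MDS codes of $\Sh$, and if $v\in A$ then $v$ has $0$ neighbors in $A$ by independence and, by the same $\Sh$ computation as in part~a) applied to $B$ (since $v\notin B$), exactly $2$ neighbors in $B$, contributing $2$. Hence $v$ has $2m+n$ neighbors in $M$ and $4m+2n$ in $\overline M$, giving the first row; since $\overline M$ is again a \MMDS code, the same computation on $\overline M$ gives the second row.

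The converse of part~b) is where the nontrivial input enters, and I would route it through Proposition~\ref{p:eb} rather than trying to recover the intricate local structure of a \MMDS code directly. Assuming $(M,\overline M)$ equitable with the stated matrix, both sides see $4m+2n$ crossing edges per vertex, so $|M|(4m+2n)=|\overline M|(4m+2n)$, forcing $|M|=|\overline M|=4^{2m+n}/2$ (for $m+n\ge1$); then the edge boundary of $M$ is $(4^{2m+n}/2)(4m+2n)=(2m+n)4^{2m+n}$, exactly the maximum value in Proposition~\ref{p:eb}(a), so by Proposition~\ref{p:eb}(b) $M$ is a \MMDS code. The main obstacle is precisely this step: the \MMDS quotient matrix only prescribes an \emph{average} behavior (every vertex has $2m+n$ same-side neighbors), which on its own does not encode the "two disjoint MDS codes in every Shrikhande subgraph" requirement, and it is the extremal characterization of Proposition~\ref{p:eb} that converts the global edge count back into the local combinatorial structure. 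The remaining work is only the finite verification inside $\Sh$ that an outside vertex of an MDS code has exactly two neighbors in it, a routine check on the codes of Figures~\ref{f:mds} and~\ref{f:2mds}.
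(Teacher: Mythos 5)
Your proposal is correct and follows essentially the same route as the paper: the converse directions are obtained exactly as in the paper's proof (independence from the zero diagonal entry plus the cardinality count for part~a), and the edge-boundary count fed into Proposition~\ref{p:eb} for part~b)), while the forward directions are the routine local counts that the paper leaves implicit. Your explicit verification that every vertex outside an MDS code of $\Sh$ has exactly two neighbours in it is a correct filling-in of that omitted detail.
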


\begin{proof}
We will use the formula $|S|=\frac{c}{b+c}|\VV G|$ for the cardinality of the first cell $S$
of an equitable partition of  a graph $G$ with the quotient matrix 
$\left[ {a \atop c}\ {b \atop d}  \right]$.

(a) For the first matrix, we have $a=0$; so, $C$ is an independent set.
Counting its cardinality gives the cardinality of an MDS code.
For the second matrix, similar argument works for $\overline C$.

(b) Count the size $|M|\cdot b$ of the edge boundary and apply Proposition~\ref{p:eb}.
\end{proof}

However, 
the cases $a=0$, $a=d$, and $a=2d$ are not all possible cases.
In $D(m,0)$ (i.e., $d=2m$), the cases $a=0.5d$ and $a=1.5d$ are also feasible; 
that is there exists an equitable partition with the quotient matrix 
$\left[ {m \atop 3m}\ {5m \atop 3m}  \right]$.
The first ($m=1$) such partition 
is shown in Fig.~\ref{f:1533}.

\begin{proposition}\label{p:1533}
Let $(C,\overline C)$ be the partition shown in Fig.~\ref{f:1533}. 
Then 
\begin{eqnarray*}
\bigg(&\Big\{(x_1,...,x_m)\in \VV D(m,0) \,\Big|\, \sum_{i=1}^m x_i \in C\Big\},&\\
&\Big\{(x_1,...,x_m)\in \VV D(m,0) \,\Big|\, \sum_{i=1}^m x_i \in \overline C\Big\}&\bigg) 
\end{eqnarray*}
is an equitable partition with the quotient matrix 
$\left[ {1m \atop 3m}\ {5m \atop 3m}  \right]$.
\end{proposition}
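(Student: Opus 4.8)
The plan is to reduce everything to the single equitable partition of the Shrikhande graph displayed in Fig.~\ref{f:1533} and then to propagate it to $D(n,0)=\Sh^n$ through the summation homomorphism. First I would record the base case: a direct inspection of the $16$-vertex graph in Fig.~\ref{f:1533} shows that $(C,\overline C)$ is an equitable partition of $\Sh$ with quotient matrix $\left[ {1 \atop 3}\ {5 \atop 3}  \right]$, i.e.\ every vertex of $C$ has exactly one neighbour in $C$ (and five in $\overline C$), while every vertex of $\overline C$ has exactly three neighbours in each cell. This is the only nonroutine verification, and it is a finite check.

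The engine of the proof is the observation that $D(n,0)$ is the Cayley graph of $(Z_4^2)^n$ with connecting set $S^*=\bigcup_{i=1}^n\{(0,\dots,g,\dots,0)\mid g\in S\}$, where $S=\{01,10,11,03,30,33\}$ is the connecting set of $\Sh$, and that $\phi\colon (Z_4^2)^n\to Z_4^2$, $\phi(\bar x)=\sum_{i=1}^n x_i$, is a group homomorphism. For a generator $e\in S^*$ whose nonzero entry $g$ sits in coordinate $i$ we have $\phi(\bar x+e)=\phi(\bar x)+g$; and as $e$ ranges over $S^*$, each $g\in S$ is produced exactly $n$ times (once per coordinate). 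Hence the multiset $\{\phi(w)\mid w\text{ a neighbour of }\bar x\}$ is precisely $n$ copies of $\{\phi(\bar x)+g\mid g\in S\}$, which is $n$ copies of the neighbourhood of $\phi(\bar x)$ in $\Sh$.

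From here the conclusion is immediate. Write $s=\phi(\bar x)$. The number of neighbours $w$ of $\bar x$ with $\phi(w)\in C$ equals $n$ times the number of $C$-neighbours of $s$ in $\Sh$, and the latter depends only on whether $s\in C$ or $s\in\overline C$ by the base case. Thus if $s\in C$ the vertex $\bar x$ has $n\cdot 1=n$ neighbours in the first cell and, since $D(n,0)$ is $6n$-regular, $6n-n=5n$ in the second; if $s\in\overline C$ it has $n\cdot 3=3n$ neighbours in each cell. These numbers do not depend on the particular $\bar x$ within a cell, so the pullback partition is equitable, and its quotient matrix is $n\left[ {1 \atop 3}\ {5 \atop 3}  \right]=\left[ {n \atop 3n}\ {5n \atop 3n}  \right]$, which is the asserted matrix (the number $n$ of Shrikhande factors playing the role of $m$).

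I do not expect any genuine obstacle: the summation map pushes the equitable structure of $\Sh$ forward and simply multiplies the quotient matrix by the number of factors. The only points needing care are the finite verification of the base case in Fig.~\ref{f:1533} and the bookkeeping that each connecting-set element $g\in S$ is hit exactly $n$ times by $\phi$ on $S^*$, which is exactly what produces the scaling factor $n$.
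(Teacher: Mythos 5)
Your proof is correct. The paper itself gives no argument here (it simply declares ``The proof is straightforward''), and your homomorphism-pullback reasoning --- each generator $g$ of the Shrikhande connecting set is realized exactly once per coordinate, so the neighbourhood of $\bar x$ projects under $\phi(\bar x)=\sum_i x_i$ onto $n$ copies of the neighbourhood of $\phi(\bar x)$ in $\Sh$, scaling the base quotient matrix $\left[{1 \atop 3}\ {5 \atop 3}\right]$ by $n$ --- is exactly the intended straightforward verification, with the only remaining work being the finite check of the base case asserted in the caption of Fig.~\ref{f:1533}.
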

The proof is straightforward.
\begin{figure}
\centering
\sh{nz nz nz zz}{zz zz nz nz}{nz nz zz nz}{zz nz zz nz}
\ \ \ \ 
\begin{tikzpicture}[
scale=0.7, rotate=180,
nz/.style={circle,fill=white,draw=black, 
           inner sep=2.5pt},
xz/.style={circle,fill=black!50!white,draw=black, 
           inner sep=2.5pt},
zz/.style={circle,fill=black,draw=black, 
           inner sep=2.5pt},
scale=0.7]
\begin{scope}
\clip [xslant=-0.577, yscale=0.867] (-2.4,-2.4) -- (-2.4,-1.0) -- (-1.4,-0.0) -- (-1.4,1.0) -- (-0.0,2.4) -- (1.4,2.4) -- (1.4,1.4) -- (2.4,1.4) -- (2.4,-0.0) -- (1.0,-1.4) -- (0.0,-1.4) -- (-1.0,-2.4) -- cycle ;
\draw[xslant=0.577,ystep=.866,xstep=1,draw=black] (-4.9,-2.1) grid (5.4,3.9);
\draw[xslant=-0.577,ystep=9.866,xstep=1,draw=black] (-3.4,-2.1) grid (6.4,3.9);
\draw (-120:2) 
\shpart zz nz nz nz!
++(120:1) \shpart nz zz nz nz!
++(120:1) ++(1,0)  \shpart nz nz zz zz!
++(120:1) \shpart nz zz nz nz!
++(120:1) \shpart nz zz nz nz!;
\end{scope}
\end{tikzpicture}
\caption{An equitable $2$-partition with quotient matrix $\left[ {1 \atop 3}\ {5 \atop 3}  \right]$}
\label{f:1533}
\end{figure}

\begin{remark}
Equitable partitions with considered parameters
are connected with unbalanced boolean functions of maximal correlation immunity,
in the sense of the bound \cite{FDF:CorrImmBound}.
A function $f:\VV H(N,2)\to \{0,1\}$ is \emph{unbalanced} if the number of its ones differs from $0$, $2^{N-1}$, and $2^N$,
and is called a $k$th order correlation immune if it has the same number of ones in every subgraph isomorphic to $H(N-k,2)$. 
As was shown in \cite{FDF:CorrImmBound},
the unbalanced boolean functions with maximum correlation immunity order, $2N/3-1$, 
correspond to the equitable partitions of the hypercube graph $H(N,2)$, $N=3d$,
with quotient matrix 
$\left[ {a \atop a+d}\ {3d-a \atop 2d-a}  \right]$, $3d-a \ne a+d$.
Only three classes such equitable partitions are currently known, they have $a=0$, $a=d/2$, and $a=3d/4$ \cite{FDF:12cube.en}.
One can note that in the first two cases,
the quotient matrices are the same as in
Proposition~\ref{p:cr}(a) and  Proposition~\ref{p:1533},
corresponding to the MDS-codes and ``\MMMDS codes''.
Indeed, there is there is a close connection between the considered classes of objects in $D(m,n)$ and in $H(6m+3n,2)$:
with an inverse covering  (locally isomorphic map from $\VV H(6m+3n,2)$ to $\VV D(m,n)$) of $D(m,n)$ by $H(6m+3n,2)$, 
any equitable partition of $D(m,n)$ is mapped to an equitable partition of $H(6m+3n,2)$ with the same quotient matrix.
Such covering can be easily constructed as a group homomorphism mapping a connecting set to a connecting set if 
we consider $H(6m+3n,2)$ as a Cayley graph of $Z_4^{3m}\times Z_2^{3n}$ 
(the connecting set consists of the elements with only one nonzero coordinate, containing $1$ or $3$).
\end{remark}
 
 \begin{remark}
 We briefly discuss a connection with another important concept 
 related to the equitable partitions.
 A set $S$ of vertices of a graph is said to be 
 \emph{completely regular} 
 (often, a \emph{completely regular code}) 
 if the partition of the graph vertices 
 with respect to the distance
 from $S$ is equitable 
 (the quotient matrix is tridiagonal in this case).
 The number of cells different from $S$ 
 in this partition is called 
 the \emph{covering radius} of $S$.
 Trivially, each cell of an equitable $2$-partition
 is a completely regular set of covering radius $1$.
 It is interesting that not only MDS codes and \MMDS codes, 
 but in some cases the connected components of \MMDS codes
 are completely regular.
 As shown in \cite{KLM:arithmetic}, using the Cartesian product, 
 from every completely regular code of covering radius $1$
 one can obtain 
 a completely regular code of arbitrary covering
 radius.
 A component of a decomposable \MMDS code can be
 represented as the Cartesian product of $k$ \MMDS codes,
 where $k$ is from (\ref{eq:can2MDS}).
 As follows from the results of \cite{KLM:arithmetic},
 such a component is completely regular
 if and only if all \MMDS codes
 $M_i$, $i=1,...,k$ in the decomposition (\ref{eq:can2MDS})
 have the same quotient matrices. 
 This happens if and only if
 $2m_1+n_1 = \ldots = 2m_k+n_k$.
 \end{remark}

\section{Conclusion}
We have proven a characterization 
of the distance-$2$ MDS codes,
or maximum independent sets,
in the Doob graphs $D(m,n)$.

For related objects in $D(m,n)$, 
called the \MMDS codes, we have shown the equivalence between 
the connectedness and the indecomposability.
However, the problem of characterization of the \MMDS codes
(namely, those of them that cannot be split into two MDS codes)
remains open.
We have shown that the \MMDS codes are exactly the sets with maximum cut
(edge boundary).

We have noted that the MDS codes and the \MMDS codes in $D(m,n)$ 
are the equitable $2$-partitions with certain quotient matrices.
The eigenvalues of these matrices are the minimum and the maximum eigenvalues
of $D(m,n)$. We have found that in the case $n=0$, there is a third class
of equitable $2$-partitions that corresponds to these eigenvalues.
Characterizing all partitions from this class 
is also an interesting direction for further research.



\providecommand\href[2]{#2} \providecommand\url[1]{\href{#1}{#1}}
  \def\DOI#1{{\small {DOI}:
  \href{http://dx.doi.org/#1}{#1}}}\def\DOIURL#1#2{{\small{DOI}:
  \href{http://dx.doi.org/#2}{#1}}}

\end{document}